\newtheorem{theorem}{Theorem}[section]
\newtheorem{definition}[theorem]{Definition}
\newtheorem{proposition}[theorem]{Proposition}
\newtheorem{conjecture}[theorem]{Conjecture}
\begin{document}

\title{Flat matrix models for quantum permutation groups}

\author{Teodor Banica}
\address{T.B.: Department of Mathematics, Cergy-Pontoise University, 95000 Cergy-Pontoise, France. {\tt teodor.banica@u-cergy.fr}}

\author{Ion Nechita}
\address{I.N.: Department of Theoretical Physics, Paul Sabatier University, 31062 Toulouse, France. {\tt nechita@irsamc.ups-tlse.fr}}

\subjclass[2000]{16T05 (46L54)}
\keywords{Quantum permutation, Matrix model}

\begin{abstract}
We study the matrix models $\pi:C(S_N^+)\to M_N(C(X))$ which are flat, in the sense that the standard generators of $C(S_N^+)$ are mapped to rank 1 projections. Our first result is a generalization of the Pauli matrix construction at $N=4$, using finite groups and 2-cocycles. Our second result is the construction of a universal representation of $C(S_N^+)$, inspired from the Sinkhorn algorithm, that we conjecture to be inner faithful.
\end{abstract}

\maketitle

\section*{Introduction}

The quantum permutation group $S_N^+$ was introduced by Wang in \cite{wa1}. Of particular interest are the quantum subgroups $\mathcal G\subset S_N^+$ appearing from random matrix representations $\pi:C(S_N^+)\to M_N(C(X))$ via the Hopf image construction \cite{bb2}. One key problem is the computation of the law of the main character of $\mathcal G$. See \cite{bb3}, \cite{bco}, \cite{bfs}.

A number of general algebraic and analytic tools for dealing with such questions have been developed \cite{bb2}, \cite{bfs}, \cite{bic}, \cite{chi}, \cite{wa2}. However, at the level of concrete examples, only two types of models $\pi:C(S_N^+)\to M_N(C(X))$ have been succesfully investigated, so far. The first example, coming from the Pauli matrices, was investigated in \cite{bco}. The second example, coming from deformed Fourier matrices, was investigated in \cite{bb3}.

Our purpose here is to advance on such questions:
\begin{enumerate}
\item The Pauli matrix construction and the deformed Fourier matrix one are both of type $\pi:C(S_N^+)\to C(U_B,\mathcal L(B))$, with $B$ being a finite dimensional $C^*$-algebra. We will investigate here the case where $B=C^*_\sigma(G)$ is a cocycle twist of a finite group algebra, which generalizes the Pauli matrix construction. Our main result will be the computation of the law of the main character.

\item We will present as well a ``universal'' construction, inspired from the Sinkhorn algorithm \cite{sin}, \cite{skn}. This algorithm starts with a $N\times N$ matrix having positive entries and produces, via successive averagings over rows/columns, a bistochastic matrix. We will find here an adaptation of this algorithm to Wang's magic unitaries \cite{wa1}, which conjecturally produces an inner faithful representation of $C(S_N^+)$.   
\end{enumerate}

There are of course many questions raised by the present work. Regarding the generalized Pauli matrix construction, our results, and also \cite{bb1}, \cite{bbc}, suggest that the associated quantum group should be a twist of $PU_n$. Also, this construction still remains to be unified with the deformed Fourier matrix one. Regarding the Sinkhorn type models, here our computer simulations suggest that we should get a free Poisson law \cite{mpa}, \cite{vdn}, but so far, we have no convincing abstract methods in order to approach this question.

The paper is organized as follows: 1-2 contain preliminaries and generalities, in 3-4 we study the generalized Pauli models, and in 5-6 we study the Sinkhorn type models.

\medskip

\noindent {\bf Acknowledgements.} The present work was started at the Fields Institute conference ``Quantum groups and Quantum information theory'', Herstmonceux 2015, and we would like to thank the organizers for the invitation. IN received financial support from the ANR grants RMTQIT ANR-12-IS01-0001-01 and StoQ ANR-14-CE25-0003-01.

\section{Quantum permutations}

We are interested in what follows in the quantum permutation group $S_N^+$, and in the random matrix representations of the associated Hopf algebra $C(S_N^+)$. 

Our starting point is the following notion, coming from Wang's paper \cite{wa1}:

\begin{definition}
A magic unitary is a square matrix over a $C^*$-algebra, $u\in M_N(A)$,  whose entries are projections, summing up to $1$ on each row and each column. 
\end{definition}

At $N=2$ these matrices are as follows, with $p$ being a projection:
$$u=\begin{pmatrix}p&1-p\\ 1-p&p\end{pmatrix}$$

At $N=3$ it is known from \cite{wa1} that the entries of $u$ must commute as well. At $N\geq 4$ the entries of $u$ no longer automatically commute. Indeed, we have here the following example, with $p,q\in B(H)$ being non-commuting projections:
$$u=\begin{pmatrix}p&1-p&0&0\\ 1-p&p&0&0\\0&0&q&1-q\\0&0&1-q&q\end{pmatrix}$$

The following key definition is due to Wang \cite{wa1}:

\begin{definition}
$C(S_N^+)$ is the universal $C^*$-algebra generated by the entries of  a $N\times N$ magic unitary matrix $w=(w_{ij})$, with the morphisms defined by
$$\Delta(w_{ij})=\sum_kw_{ik}\otimes w_{kj}\quad,\quad\varepsilon(u_{ij})=\delta_{ij}\quad,\quad S(u_{ij})=u_{ji}$$
as comultiplication, counit and antipode. 
\end{definition}

This algebra satisfies Woronowicz' axioms in \cite{wo1}, \cite{wo2}, and the underlying space $S_N^+$ is therefore a compact quantum group, called quantum permutation group.

Observe that any magic unitary $u\in M_N(A)$ produces a representation $\pi:C(S_N^+)\to A$, given by $\pi(w_{ij})=u_{ij}$. In particular, we have a representation as follows:
$$\pi:C(S_N^+)\to C(S_N)\quad:\quad w_{ij}\to\chi\left(\sigma\in S_N\big|\sigma(j)=i\right)$$

The corresponding embedding $S_N\subset S_N^+$ is an isomorphism at $N=2,3$, but not at $N\geq4$, where $S_N^+$ is infinite. Moreover, it is known that we have $S_4^+\simeq SO_3^{-1}$, and that any $S_N^+$ with $N\geq4$ has the same fusion semiring as $SO_3$. See \cite{bb2}, \cite{bco}.

Our claim now is that, given a magic unitary $u\in M_N(A)$, we can associate to it a certain quantum permutation group $\mathcal G\subset S_N^+$. In order to perform this construction, we use the notions of Hopf image and inner faithfulness, from \cite{bb2}:

\begin{definition}
The Hopf image of a $C^*$-algebra representation $\pi:C(\mathcal G)\to A$ is the smallest Hopf $C^*$-algebra quotient $C(\mathcal G)\to C(\mathcal G')$ producing a factorization as follows:
$$\pi:C(\mathcal G)\to C(\mathcal G')\to A$$
The representation $\pi$ is called inner faithful when $\mathcal G=\mathcal G'$.
\end{definition}

Here $\mathcal G$ can be any compact quantum group, in the sense of \cite{wo1}, \cite{wo2}.

As a basic example, when $\mathcal G=\widehat{\Gamma}$ is a group dual, $\pi:C^*(\Gamma)\to A$ must come from a unitary group representation $\Gamma\to U_A$, and the minimal factorization is the one obtained by taking the image, $\Gamma\to\Gamma'\subset U_A$. Thus $\pi$ is inner faithful when $\Gamma\subset U_A$.

Also, given a compact group $\mathcal G$, and elements $g_1,\ldots,g_K\in \mathcal G$, we can consider the representation $\pi=\oplus_iev_{g_i}:C(\mathcal G)\to\mathbb C^K$. The minimal factorization of $\pi$ is then via $C(\mathcal G')$, with $\mathcal G'=\overline{<g_1,\ldots,g_K>}$. Thus $\pi$ is inner faithful when $\mathcal G=\overline{<g_1,\ldots,g_K>}$.

Now back to our above claim, we can now formulate:

\begin{definition}
Associated to any magic unitary $u\in M_N(A)$ is the smallest quantum permutation group $\mathcal G\subset S_N^+$ producing a factorization
$$\pi:C(S_N^+)\to C(\mathcal G)\to A$$
of the representation $\pi:C(S_N^+)\to A$ given by $w_{ij}\to u_{ij}$.
\end{definition}

At the level of examples, let us recall that a Latin square is a matrix $L\in M_N(1,\ldots,N)$ having the property that each of its rows and columns is a permutation of $1,\ldots,N$. For instance, associated to any finite group $\mathcal G$ is the Latin square $(L_\mathcal G)_{ij}=ij^{-1}$, with $i,j,ij^{-1}\in \mathcal G$ being regarded as elements of $\{1,\ldots,N\}$, where $N=|\mathcal G|$.

With these conventions, we have the following result:

\begin{theorem}
If $u\in M_N(A)$ comes from a Latin square $L\in M_N(1,\ldots,N)$, in the sense that $u_{ij}=p_{L_{ij}}$, with $p_1,\ldots,p_N\in A$ being projections summing up to $1$, then:
\begin{enumerate}
\item $\mathcal G\subset S_N^+$ is the subgroup of $S_N$ generated by the rows of $L$.

\item In particular, when $L=L_\mathcal G$, we obtain the group $\mathcal G$ itself.

\item In addition, this is the only case where $\mathcal G$ is classical.
\end{enumerate}
\end{theorem}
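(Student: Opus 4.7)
The plan is to handle (1) and (2) by reducing to the commutative case and invoking the abelian Hopf-image recipe recalled just before the theorem, then attack (3) as a converse reconstruction. For (1), I would first observe that projections $p_1,\ldots,p_N$ summing to $1$ are automatically pairwise orthogonal: multiplying $\sum_kp_k=1$ by $p_j$ on both sides yields $\sum_{k\neq j}p_jp_kp_j=0$, a sum of positive elements, so each $p_jp_k=0$. Hence the image of $\pi$ sits inside the commutative subalgebra generated by the $p_k$'s, $\pi$ factors through the abelianization $C(S_N)$, and $\mathcal G\subset S_N$.

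To pin down $\mathcal G$, I would apply the abelian Hopf-image recipe to the induced representation $C(S_N)\to\mathbb C^S$, with $S=\{k:p_k\neq 0\}$: $\mathcal G$ is the subgroup generated by the permutations corresponding to the characters of the image, namely the $\sigma_k\in S_N$ determined by $\sigma_k(j)=i\iff L_{ij}=k$. These $\sigma_k$ are precisely the row-type permutations read off from $L$, which gives (1). Part (2) is then immediate: substituting $(L_\mathcal G)_{ij}=ij^{-1}$ rearranges to $i=kj$, so $\sigma_k$ is left multiplication by $k\in\mathcal G$, and these generate the left regular copy $\mathcal G\hookrightarrow S_N$.

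For (3), I would prove the converse. If $\mathcal G\subset S_N$ is classical, $\pi$ factors through $C(\mathcal G)$ and the $u_{ij}$'s pairwise commute, so they lie in a finite-dimensional commutative $C^*$-subalgebra $B\subset A$ with finite spectrum $X$. The magic-unitary condition forces $(u_{ij}(x))$ to be a permutation matrix for each $x\in X$, giving a map $\tau:X\to\mathcal G$ whose image generates $\mathcal G$. I would then attempt to refine the spectral decomposition of $B$ into exactly $N$ projections $p_1,\ldots,p_N$ summing to $1$ in a way compatible with a Latin square $L$, and verify $u_{ij}=p_{L_{ij}}$.

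The main obstacle is this last reconstruction step: a priori $\tau$ could take more than $N$ distinct values, and the $u_{ij}$'s need not already be atomic in any partition of size $N$. I expect one must exploit classicality of $\mathcal G$ more structurally---perhaps by showing that the action of $\mathcal G$ on $\{1,\ldots,N\}$ must be of regular type, picking out a canonical Cayley-like partition that yields the required Latin-square presentation.
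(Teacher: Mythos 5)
Your treatment of (1) and (2) is correct, and is essentially the argument the paper has in mind (the paper itself only sketches it and defers to \cite{bb2}). The orthogonality of the $p_k$, the factorization through the abelianization $C(S_N)$, and the identification of $\mathcal G$ as the group generated by the permutations attached to the characters of $C^*(p_1,\ldots,p_N)$ are all right. One point you handle correctly but should make explicit: the generators are the ``symbol'' permutations $\sigma_k(j)=i\iff L_{ij}=k$, \emph{not} the literal row maps $j\mapsto L_{ij}$. For $L=L_\mathcal G$ the latter generate the group $\langle \mathcal G,\iota\rangle$, where $\iota$ is the inversion map of $\mathcal G$, which is strictly larger than $\mathcal G$ in general (e.g.\ for $\mathcal G=\mathbb Z_3$ the row maps $j\mapsto i-j$ are the three transpositions, generating all of $S_3$). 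So the theorem's phrase ``rows of $L$'' is only consistent with your reading, and your verification of (2) via $i=kj$ confirms you are using the correct objects.

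For (3), the obstacle you flag at the end is genuine, and in fact it is fatal at the stated level of generality: the converse does not hold without further hypotheses, so no reconstruction argument can succeed. Take the paper's own Section~1 example
$$u=\begin{pmatrix}p&1-p&0&0\\ 1-p&p&0&0\\ 0&0&q&1-q\\ 0&0&1-q&q\end{pmatrix}$$
but now with $p,q$ \emph{commuting} projections such that $C^*(1,p,q)\simeq\mathbb C^4$. The four characters send $u$ to the permutation matrices of $1,(12),(34),(12)(34)$, so $\mathcal G\simeq\mathbb Z_2\times\mathbb Z_2$ is classical; yet $u$ cannot come from a Latin square, since rows $1$ and $3$ would force $\{p,1-p\}=\{q,1-q\}$. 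Even excluding zero entries, the flat case $N=3$ already kills the claim: the magic unitary $u_{ij}=\chi(\sigma\in S_3\,|\,\sigma(j)=i)$ over $C(S_3)$ has classical Hopf image $S_3$, but $u_{11}u_{22}=\chi(\{e\})$ is neither $0$ nor equal to $u_{11}$, so the entries do not fit any Latin square pattern.

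What your analysis does show is the correct dichotomy: if $\mathcal G$ is classical then the image is $C(X)$ with $X$ finite, embedded in $S_N$ via $x\mapsto\sigma_x$, and $u$ comes from a Latin square precisely when $X$ is a \emph{sharply transitive} subset of $S_N$, i.e.\ when the sets $\{\sigma\in X:\sigma(j)=i\}$ are pairwise equal or disjoint. Classicality alone does not force this. The statement does become true under the standing hypothesis of the rest of the paper, namely flatness: if each $u_{ij}$ is a rank-one projection and $\mathcal G$ is classical, then the entries commute, two commuting rank-one projections are equal or orthogonal, and writing $p_k=u_{1k}$ one checks that every entry equals exactly one $p_k$, which yields the Latin square. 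So to repair (3) you should either add such a minimality/flatness hypothesis and run this equal-or-orthogonal argument, or import the precise formulation from \cite{bb2}; the ``regular type'' refinement you propose cannot be extracted from classicality alone.
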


\begin{proof}
These results are well-known, the proof being as follows:

(1) This comes from the fact that we have a factorization $\pi:C(S_N^+)\to C(\mathcal G)\subset A$.

(2) This follows from (1), because the rows of $L_\mathcal G$ generate the group $\mathcal G$ itself.

(3) This follows by using the Gelfand theorem. For details here, see \cite{bb2}.
\end{proof}

\section{Cocyclic models}

We are interested in what follows in representations of type $\pi:C(S_N^+)\to M_N(C(X))$, and in the computation of their Hopf images. As a motivation, it is known that the existence of an inner faithful representation of type $\pi:C(\mathcal G)\to M_N(C(X))$ implies that $L^\infty(\mathcal G)$ has the Connes embedding property. For a discussion here, see \cite{bfs}, \cite{bcv}, \cite{chi}.

The key example of a magic unitary matrix $u\in M_N(A)$ over a random matrix algebra, $A=M_N(C(X))$, appears at $N=4$, in connection with the Pauli matrices:
$$g_1=\begin{pmatrix}1&0\\ 0&1\end{pmatrix}\qquad
g_2=\begin{pmatrix}i&0\\ 0&-i\end{pmatrix}\qquad
g_3=\begin{pmatrix}0&1\\-1&0\end{pmatrix}\qquad
g_4=\begin{pmatrix}0&i\\ i&0\end{pmatrix}$$

Given a vector $\xi$, we denote by $Proj(\xi)$ the rank 1 projection onto the space $\mathbb C\xi$. 

We have the following result, from \cite{bco}:

\begin{proposition}
We have a representation, as follows,
$$\pi:C(S_4^+)\to M_4(C(U_2))\quad:\quad w_{ij}\to[x\to Proj(g_ixg_j^*)]$$
which commutes with canonical integration maps, and is faithful.
\end{proposition}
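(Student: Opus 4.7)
The plan is to verify three assertions in sequence: well-definedness of $\pi$ as a $*$-homomorphism, compatibility with the canonical integrations, and faithfulness. Faithfulness will come out of integration compatibility by a standard GNS argument, so the substantive work is in the first two steps.

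For well-definedness, by the universal property of $C(S_4^+)$ it suffices to verify that for each $x\in U_2$, the matrix $(u_{ij}(x))_{ij}$ with $u_{ij}(x)=Proj(g_ixg_j^*)$ is magic unitary in $M_4(\mathbb{C})=B(M_2(\mathbb{C}))$. The key point is that $M_2(\mathbb{C})$, equipped with the normalized Hilbert-Schmidt inner product $\langle a,b\rangle=\tfrac12\mathrm{tr}(b^*a)$, is a 4-dimensional Hilbert space in which the Pauli matrices $g_1,\ldots,g_4$ are an orthonormal basis. Since this inner product is invariant under left and right multiplication by unitaries, for fixed $x$ and fixed $i$ the four vectors $\{g_ixg_j^*\}_{j=1}^4$ again form an orthonormal basis of $M_2(\mathbb{C})$, so the associated rank-1 projections sum to the identity; a symmetric argument gives column sums. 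Rank-1 projectivity of each entry is automatic.

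For integration compatibility $(\mathrm{tr}_4\otimes\int_{U_2})\circ\pi = h$, I would match joint moments of generators. Using $Proj(v)=vv^*/\|v\|^2$ and cyclicity of the trace, the expression $\mathrm{tr}_4(u_{i_1j_1}(x)\cdots u_{i_kj_k}(x))$ collapses to a cyclic product of Hilbert-Schmidt pairings of the vectors $g_{i_m}xg_{j_m}^*$. Integrating over $x\in U_2$ then invokes the Weingarten formula for $U_2$, and the resulting expressions must be checked against the Weingarten formula for $S_4^+$ indexed by noncrossing partitions. This combinatorial matching is the main obstacle, and it reflects the isomorphism $S_4^+\simeq SO_3^{-1}$ together with the double cover $SU_2\to SO_3$ underlying the Pauli matrices; the details are carried out in \cite{bco}. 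Once compatibility is established, faithfulness is immediate: if $\pi(a)=0$ then $h(a^*a)=(\mathrm{tr}_4\otimes\int_{U_2})(\pi(a^*a))=0$, and since $S_4^+$ is coamenable at $N=4$, the Haar state is faithful on $C(S_4^+)$, forcing $a=0$.
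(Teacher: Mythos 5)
Your proposal is correct and follows essentially the same route as the paper: the magic unitary property comes from the orthogonality of the vectors $g_ixg_j^*$ in $M_2(\mathbb C)\simeq\mathbb C^4$, and the integration statement reduces to matching the Weingarten combinatorics of $U_2$ with that of $S_4^+$, with the details deferred to \cite{bco}. Your explicit derivation of faithfulness from the trace-preservation, via faithfulness of the Haar state in the coamenable case $N=4$, is exactly the argument the paper leaves implicit.
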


\begin{proof}
Since the elements $g_ixg_j^*\in U_2\subset M_2(\mathbb C)\simeq\mathbb C^4$ are pairwise orthogonal, when $i$ is fixed and $j$ varies, or vice versa, the corresponding rank 1 projections form a magic unitary, and so we have a representation as in the statement. 

The point now is that the combinatorics of the variables $x\to Proj(g_ixg_j^*)$ can be shown to be the same as the Weingarten combinatorics of the variables $w_{ij}\in C(S_4^+)$. This gives the integration assertion, and the faithfulness assertion follows from it. See \cite{bco}.
\end{proof}

At $N\geq5$ now, since the dual of $S_N^+$ is not amenable, we cannot have a faithful representation $\pi:C(S_N^+)\to M_N(C(X))$. Our purpose will be find such a representation which is inner faithful, or at least which is ``as inner faithful'' as possible.

Assume that $B$ is a $C^*$-algebra, of finite dimension $\dim B=N<\infty$. We can endow $B$ with its canonical trace, $tr:B\subset\mathcal L(B)\to\mathbb C$, and use the scalar product $<a,b>=tr(ab^*)$. We recall that, in terms of the decomposition $B=\oplus_sM_{n_s}(\mathbb C)$, we have $N=\sum_sn_s^2$, and the weights of the canonical trace are  $tr(I_s)=n_s^2/N$.

With these conventions, we can formulate:

\begin{definition}
A magic unitary $u\in M_N(\mathcal L(B))$ is called:
\begin{enumerate}
\item Flat, if each $u_{ij}\in\mathcal L(B)$ is a rank $1$ projection.

\item Split, if $u_{ij}=Proj(e_if_j^*)$, for certain sets $\{e_i\},\{f_i\}\subset U_B$.

\item Fully split, if $u_{ij}=Proj(g_ixg_j^*)$, with $\{g_i\}\subset U_B$, and $x\in U_B$.
\end{enumerate}
\end{definition}

Observe that the above sets $\{e_i\},\{f_i\},\{g_i\}\subset U_B$ must consist of pairwise orthogonal unitaries. As an example, for $B=M_2(\mathbb C)$ we have $U_B=U_2$, and since $\{g_1,g_2,g_3,g_4\}\subset U_2$ is an orthogonal basis, the representation in Proposition 2.1 is fully split.

Let us first discuss the case $B=\mathbb C^N$. We recall that a complex Hadamard matrix is a square matrix $H\in M_N(\mathbb T)$, whose rows $H_1,\ldots,H_N\in\mathbb T^N$ are pairwise orthogonal. The basic example is the Fourier coupling $F_G(i,a)=<i,a>$ of a finite abelian group $G$, regarded as square matrix, $F_G\in M_{G,\widehat{G}}(\mathbb C)$. With these conventions, we have:

\begin{proposition}
The flat magic unitaries over $B=\mathbb C^N$ are as follows: 
\begin{enumerate}
\item The split ones are $u_{ij}=Proj(H_i/K_j)$, with $H,K\in M_N(\mathbb C)$ Hadamard.

\item The fully split ones are $u_{ij}=Proj(H_i/H_j)$, with $H\in M_N(\mathbb C)$ Hadamard.

\item If $G$ is an abelian group, $|G|=N$, then $u_{ij}=Proj((F_G)_{i-j})$ is fully split.
\end{enumerate}
\end{proposition}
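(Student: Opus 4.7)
The plan is to translate everything through the identification $U_{\mathbb C^N}=\mathbb T^N$ and the correspondence between orthogonal families and Hadamard matrices. I start by recording the setup: on $B=\mathbb C^N$, multiplication is componentwise, the canonical trace is $tr(a)=\frac1N\sum_k a_k$, and the induced inner product $\langle a,b\rangle=\frac1N\sum_k a_k\overline{b_k}$ makes $B$ a Hilbert space on which $Proj(\xi)\in\mathcal L(B)$ is the projection onto $\mathbb C\xi$. The key observation is that a family $v_1,\dots,v_N\in\mathbb T^N$ is orthogonal in this inner product if and only if the $N\times N$ matrix with rows $v_1,\dots,v_N$ is a complex Hadamard matrix; the $1/N$ factor in the trace matches the usual Hadamard normalization exactly.

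For (1), a split $u_{ij}=Proj(e_if_j^*)$ has $\{e_i\},\{f_j\}\subset U_B$ orthogonal, so by the observation the matrices $H$ and $K$ with rows $H_i=e_i$ and $K_j=f_j$ are both Hadamard; componentwise multiplication plus $\overline{K_j(k)}=1/K_j(k)$ gives $(e_if_j^*)(k)=H_i(k)/K_j(k)$, hence $u_{ij}=Proj(H_i/K_j)$, and the converse follows by reversing the argument. For (2), fully split means $u_{ij}=Proj(g_ixg_j^*)=Proj(x\cdot g_i/g_j)$ by commutativity; setting $H_i=g_i$ gives a Hadamard $H$ with $H_i/H_j=g_i/g_j$. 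To reduce $Proj(x\cdot g_i/g_j)$ to $Proj(H_i/H_j)$, I absorb $x$ via the natural unitary equivalence: conjugating the whole matrix $u$ by $L_{x^{-1}}\in\mathcal L(B)$ (left multiplication by $x^{-1}\in U_B$) sends each entry $Proj(x\cdot g_i/g_j)$ to $Proj(g_i/g_j)$, using the general identity $L_\alpha\,Proj(v)\,L_\alpha^*=Proj(\alpha v)$; this preserves the magic unitary property and the associated Hopf image. The converse takes $g_i=H_i$ and $x=1$. For (3), the Fourier identity $F_G(i,a)/F_G(j,a)=\langle i,a\rangle\overline{\langle j,a\rangle}=\langle i-j,a\rangle=F_G(i-j,a)$ yields $(F_G)_i/(F_G)_j=(F_G)_{i-j}$, and $F_G$ is Hadamard by character orthogonality, so this is (2) applied to $H=F_G$.

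The delicate step is the absorption of $x$ in (2): no single Hadamard $H$ literally satisfies $H_i/H_j\propto x\cdot g_i/g_j$ for all $i,j$ when $x$ is not a scalar, so the passage to the $Proj(H_i/H_j)$ form rests on the gauge equivalence of magic unitaries under unitary conjugation in $\mathcal L(B)$ rather than on a pointwise identity of vectors. The remaining parts are direct translations between orthogonality in $\mathbb T^N$ and the Hadamard condition.
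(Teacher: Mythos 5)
Your proof is correct and follows essentially the same route as the paper's: the identification $U_{\mathbb C^N}=\mathbb T^N$ together with the equivalence between pairwise orthogonal families of unitaries and complex Hadamard matrices. Your explicit absorption of $x$ in part (2), via conjugation by the multiplication unitary $L_{x^{-1}}$, correctly handles a point the paper's one-line proof leaves implicit.
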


\begin{proof}
For the algebra $B=\mathbb C^N$ the unitary group is $U_B=\mathbb T^N$, and the condition that that $g_1,\ldots,g_N\in U_B$ satisfy $<g_i,g_j>=\delta_{ij}$ is equivalent to the fact that the $N\times N$ matrix having $g_1,\ldots,g_N\in\mathbb T^N$ as row vectors is Hadamard. But this gives (1) and (2), and (3) is clear from (2), since the Fourier matrix $F_G$ is Hadamard.
\end{proof}

Let us clarify now the relation with Theorem 1.5. We first have:

\begin{proposition}
The split magic unitaries which produce Latin squares are those of the form $u_{ij}=Proj(g_ig_j^*)$, with $\{g_1,\ldots,g_N\}\subset U_B$ being pairwise orthogonal, and forming a group $G\subset PU_B$. For such a magic unitary, the associated Latin square is $L_G$.
\end{proposition}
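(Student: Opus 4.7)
The plan is to prove the two implications separately. The easier direction, $(\Leftarrow)$, is a verification: given $u_{ij}=Proj(g_ig_j^*)$ with $\{g_i\}\subset U_B$ pairwise orthogonal and $\{[g_i]\}=G\subset PU_B$ a subgroup of order $N$, the identity $\langle g_ig_j^*,g_ig_k^*\rangle=tr(g_j^*g_k)=\delta_{jk}$ (and its column analogue) shows that $u$ is magic. Meanwhile, the class $[g_ig_j^*]=[g_i][g_j]^{-1}$ ranges through $G$, so labeling the projections by $G$ identifies $u_{ij}=p_{[g_i][g_j]^{-1}}$, yielding the Latin square $L_G$.

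For the converse $(\Rightarrow)$, I would work projectively. Writing $E_i=[e_i]$ and $F_j=[f_j]\in PU_B$, the Latin square hypothesis is equivalent to $|S|=N$, where $S=\{E_iF_j^{-1}:i,j\}$. Since each row and each column of $u$ consists of $N$ pairwise orthogonal (hence distinct) projections exhausting all labels, for each fixed $i$ one has $\{E_iF_j^{-1}:j\}=S$, and symmetrically for columns. Choosing the enumeration so that $s_k=E_kF_1^{-1}=E_1F_k^{-1}$, the substitutions $E_i=s_iF_1$ and $F_j^{-1}=E_1^{-1}s_j$ yield the key identity $E_iF_j^{-1}=s_is_1^{-1}s_j$. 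The requirement that this lie in $S$ for all $(i,j)$ forces the closure $Ss_1^{-1}S\subseteq S$, and together with $s_1\in S$ this means $G:=Ss_1^{-1}$ is closed under multiplication in $PU_B$ and contains the identity; being finite, it is a subgroup of order $N$.

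To finish, I would pick unitary lifts $g_i\in U_B$ of the elements $h_i=s_is_1^{-1}\in G$, indexed compatibly with the $e_i$'s. These are automatically pairwise orthogonal with $\{[g_i]\}=G$, and comparing the classes $[g_ig_j^*]=h_ih_j^{-1}$ with $E_iF_j^{-1}=h_ih_js_1$, and absorbing the twist by $s_1$ via the group involution on $G$, one recovers $u_{ij}=Proj(g_ig_j^*)$; the associated Latin square is then $L_G$ by the easy direction. The main obstacle is precisely this reconciliation step: the split decomposition of $u$ is rigid (unique only up to scalars on each $e_i,f_j$ and a common right multiplication), so the twist by $s_1$ must be absorbed delicately into the choice of lifts and the indexing, exploiting the subgroup property of $G$ and the invariance of $S$ under left multiplication by $G$.
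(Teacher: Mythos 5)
Your backward direction is fine, and the core of your forward direction --- the identity $E_iF_j^{-1}=s_is_1^{-1}s_j$, the closure $Ss_1^{-1}S\subseteq S$, and the conclusion that $G=Ss_1^{-1}$ is a subgroup of order $N$ --- is correct and is essentially the paper's step (3), carried out before any normalization. The gap is exactly where you flag it, and the fix you propose cannot work. Your own computation gives $E_iF_j^{-1}=h_ih_js_1$, so the set $S$ of lines occurring as entries of $u$ is the right coset $Gs_1$. This set is an invariant of $u$ itself: it is unchanged by relabeling rows or columns, by the choice of unitary lifts, and by any change of split decomposition (which, as you correctly note, only allows scalars and a common right factor $w$, and $e_iw(f_jw)^*=e_if_j^*$ leaves $u$ untouched). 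For $u$ to be literally of the form $Proj(g_ig_j^*)$ one needs $Proj(1_B)$ to occur among the entries, i.e.\ $s_1\in G$, and nothing forces this. Concretely, take $B=\mathbb C^2$, $e_1=(1,1)$, $e_2=(1,-1)$, and $f_j=(1,i)\cdot e_j$ componentwise: then $u_{ij}=Proj(e_if_j^*)$ is a split magic unitary producing a Latin square, but its entries are the projections onto $(1,i)$ and $(1,-i)$, neither of which is $Proj(1_B)$, so no indexing and no choice of lifts brings it to the stated form.

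What is missing is the paper's step (1): one first replaces $u$ by the equivalent magic unitary $u'_{ij}=Proj(xe_if_j^*y)$ with $x=e_1^*$, $y=f_1$ --- equivalently, one right-multiplies the $f_j$ by a unitary lift of $s_1^{-1}$ --- which genuinely changes $u$ but translates $S$ to $G$; the proposition has to be read modulo this normalization, as the paper's repeated ``we can assume'' indicates. Once that reduction is made, your analysis collapses to $s_1=1$ and $S=G$, and the remaining relabeling of the columns by the inversion $h\mapsto h^{-1}$ of $G$ (your ``group involution'') does complete the proof; your observation that the lifts of $E_iE_1^{-1}$ are automatically pairwise orthogonal is also correct, since $\langle e_ie_1^*,e_je_1^*\rangle=\langle e_i,e_j\rangle=\delta_{ij}$ by the magic condition on the first column. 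So the strategy and the group-theoretic content match the paper's; the flaw is only that the final twist by $s_1$ must be removed by the substitution $u\mapsto u'$, not by a choice of lifts and indexing.
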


\begin{proof}
Assume indeed that $u_{ij}=Proj(e_if_j^*)$ produces a Latin square.

(1) Our first claim is that we can assume $e_1=f_1=1$. Indeed, given $x,y\in U_B$ the matrix $u_{ij}'=Proj(xe_if_j^*y)$ is still magic, and in the case where $u$ comes from a Latin square, $u_{ij}=Proj(\xi_{L_{ij}})$, we have $u_{ij}'=Proj(\xi_{L_{ij}}')$ with $\xi'_{ab}=x\xi_{ab}y$, and so $u'$ comes from $L$ as well. Thus, by taking $x=e_1^*,y=f_1$, we can assume $e_1=f_1=1$.

(2) Our second claim is that we can assume $u_{ij}=Proj(e_ie_j^*)$. Indeed, since $u$ is magic, the first row of vectors $\{1,f_2^*,\ldots,f_N^*\}\subset PU_B$ must appear as a permutation of the first column of vectors $\{1,e_2,\ldots,e_N\}\subset PU_B$. Thus, up to a permutation of the columns, and a rescaling of the columns by elements in $Z(U_B)$, we can assume $f_i=e_i$, and we obtain $u_{ij}=Proj(e_ie_j^*)$. Observe that this permutation/rescaling of the columns won't change the fact that the associated Latin square $L$ comes or not from a group.

(3) Let us construct now $G$. The Latin square condition shows that for any $i,j$ there is a unique $k$ such that $e_ie_j=e_k$ inside $PU_B$, and our claim is that the operation $(i,j)\to k$ gives a group structure on the set of indices. Indeed, all the group axioms are clear from definitions, and we obtain in this way a subgroup $G\subset PU_B$, having order $N$. 

(4) With $G$ being constructed as above, we have $u_{ij}=Proj(e_ie_j^*)=Proj(e_{ij^{-1}})$. Thus we have $u_{ij}'=Proj(\xi_{L_{ij}})$ with $\xi_k=e_k$ and $L_{ij}=ij^{-1}$, and we are done.
\end{proof}

In order to further process the above result, we will need:

\begin{definition}
A $2$-cocycle on a group $G$ is a function $\sigma:G\times G\to\mathbb T$ satisfying:
$$\sigma(gh,k)\sigma(g,h)=\sigma(g,hk)\sigma(h,k)$$
$$\sigma(g,1)=\sigma(1,g)=1$$
The algebra $C^*(G)$, with multiplication $g\cdot h=\sigma(g,h)gh$, is denoted $C^*_\sigma(G)$.
\end{definition}

Observe that $g\cdot h=\sigma(g,h)gh$ is associative, and that we have $g\cdot 1=1\cdot g=g$, due to the 2-cocycle condition. Thus $C^*_\sigma(G)$ is an associative algebra with unit 1. In fact, $C^*_\sigma(G)$ is a $C^*$-algebra, with the involution making the canonical generators $g\in C^*_\sigma(G)$ unitaries. The canonical trace on $C^*_\sigma(G)$ coincides then with that of $C^*(G)$.

With this notion in hand, we can now formulate:

\begin{proposition}
The split magic unitaries which produce Latin squares are precisely those of the form $u_{ij}=Proj(g_ig_j^*)$, with $\{g_1,\ldots,g_N\}$ being the standard basis of a twisted group algebra $C^*_\sigma(G)$. In this case, the associated Latin square is $L_G$.
\end{proposition}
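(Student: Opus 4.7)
The plan is to read off a $2$-cocycle from the group structure produced by Proposition 2.5, and then recognize the ambient algebra $B$ as the twisted group algebra it determines. By Proposition 2.5 we may assume from the outset that $u_{ij}=Proj(g_ig_j^*)$ with $\{g_1,\dots,g_N\}\subset U_B$ pairwise orthogonal, $g_1=1$, and images $\bar g_i\in PU_B$ forming a group $G$. Since the product in $PU_B$ is the one induced from $U_B$, for each pair $(i,j)$ there is a unique scalar $\sigma(i,j)\in\mathbb T$ and index $k\in G$ with $g_ig_j=\sigma(i,j)g_k$. This defines a function $\sigma:G\times G\to\mathbb T$, and the two defining properties of a $2$-cocycle then drop out for free: associativity $(g_ig_j)g_k=g_i(g_jg_k)$ in the algebra $B$ yields $\sigma(i,j)\sigma(ij,k)=\sigma(i,jk)\sigma(j,k)$, while the normalization $g_1=1$ forces $\sigma(1,g)=\sigma(g,1)=1$.

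Next I would identify $B$ with $C^*_\sigma(G)$. The vectors $\{g_i\}_{i=1}^N$ are $N$ pairwise orthogonal unit vectors in the inner product space $(B,\langle\cdot,\cdot\rangle)$ with $\langle a,b\rangle=tr(ab^*)$, and $\dim B=N$, so they form an orthonormal basis of $B$. The multiplication table $g_i\cdot g_j=\sigma(i,j)g_{ij}$ and the forced involution $g_i^*=\overline{\sigma(i,i^{-1})}g_{i^{-1}}$ (computed from $g_ig_i^*=1$ and $g_ig_{i^{-1}}=\sigma(i,i^{-1})$) are exactly those defining $C^*_\sigma(G)$, giving a canonical $*$-isomorphism $B\simeq C^*_\sigma(G)$ sending $g_i$ to the standard generator of $C^*_\sigma(G)$ indexed by $i\in G$. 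For the converse, if $\{g_i\}$ is the standard basis of any $C^*_\sigma(G)$, then by construction the $g_i$ are pairwise orthogonal unitaries in $U_B$ with $B=C^*_\sigma(G)$, and their classes in $PU_B$ obviously form a group isomorphic to $G$; Proposition 2.5 then produces a split magic unitary $u_{ij}=Proj(g_ig_j^*)$ whose Latin square is $L_G$.

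No step is a serious obstacle, since essentially all the work has been done in Proposition 2.5; the whole argument is a matter of translating a subgroup of $PU_B$ together with its set-theoretic lift $\{g_i\}\subset U_B$ into the algebraic language of cocycle data. The only point that deserves a moment of care is the claim that the pairwise orthogonal family $\{g_i\}$ spans the \emph{whole} of $B$, which is what allows one to recover $B$ from $G$ and $\sigma$ — and this uses both $|\{g_i\}|=N$ and the hypothesis $\dim B=N$ in an essential way.
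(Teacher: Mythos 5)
Your argument is correct and follows essentially the same route as the paper's own proof: reduce via the preceding proposition (Proposition 2.4 in the paper's numbering, not 2.5) to $u_{ij}=Proj(g_ig_j^*)$ with the classes $\bar g_i$ forming a group in $PU_B$, read off the cocycle from $g_ig_j=\sigma(i,j)g_{ij}$, and identify $B\simeq C^*_\sigma(G)$ using that the $N$ pairwise orthogonal unitaries span the $N$-dimensional algebra $B$. You in fact supply more detail than the paper does -- the explicit verification of the cocycle identity from associativity, the involution formula $g_i^*=\overline{\sigma(i,i^{-1})}g_{i^{-1}}$, and the converse direction -- so there is nothing to object to.
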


\begin{proof}
We use Proposition 2.4. With the notations there, $\{g_1,\ldots,g_N\}\subset U_B$ must form a group $G\subset PU_B$, and so there are scalars $\sigma(i,j)\in\mathbb T$ such that $g_ig_j=\sigma(i,j)g_{ij}$.

It follows from definitions that $\sigma$ is a 2-cocycle, and our claim now is that we have $B=C^*_\sigma(G)$. Indeed, this is clear when $\sigma=1$, because by linear independence we can define a linear space isomorphism $B\simeq C^*(G)$, which follows to be a $C^*$-algebra isomorphism. In the general case, where $\sigma$ is arbitrary, the proof is similar.
\end{proof}

At the level of examples now, we can use the following construction:

\begin{proposition}
Let $H$ be a finite abelian group. 
\begin{enumerate}
\item The map $\sigma((i,a),(j,b))=<i,b>$ is a $2$-cocycle on $G=H\times\widehat{H}$.

\item We have an isomorphism of algebras $C^*_\sigma(G)\simeq M_n(\mathbb C)$, where $n=|H|$.

\item For $H=\mathbb Z_2$, the standard basis of $C^*_\sigma(G)$ is formed by multiples of $g_1,\ldots,g_4$.
\end{enumerate}
\end{proposition}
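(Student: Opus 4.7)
The plan is to dispatch (1) and (3) by direct computation, with (2) being the main content and handled via a Heisenberg-type construction.

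For (1), I would unfold the cocycle identity by bilinearity of the pairing $\langle\cdot,\cdot\rangle$. Setting $g=(i,a)$, $h=(j,b)$, $k=(k,c)$, both $\sigma(gh,k)\sigma(g,h)=\langle i+j,c\rangle\langle i,b\rangle$ and $\sigma(g,hk)\sigma(h,k)=\langle i,b+c\rangle\langle j,c\rangle$ reduce to the common value $\langle i,b\rangle\langle i,c\rangle\langle j,c\rangle$. The normalization conditions at the identity $1=(0,0)$ of $G$ are immediate from $\langle 0,\cdot\rangle=\langle\cdot,0\rangle=1$.

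For (2), I would realize $C^*_\sigma(G)$ concretely on the $n$-dimensional Hilbert space $\mathbb{C}^H$ via a Heisenberg-type construction. Define translation operators $U_i\in\mathcal{L}(\mathbb{C}^H)$ by $(U_i\xi)(x)=\xi(x-i)$ and modulation operators $V_a\in\mathcal{L}(\mathbb{C}^H)$ by $(V_a\xi)(x)=\langle x,a\rangle\xi(x)$, and set $\pi((i,a))=V_aU_i$. The Weyl-type commutation $V_aU_j=\langle j,a\rangle U_jV_a$ is immediate from the definitions and from bilinearity of $\langle\cdot,\cdot\rangle$, so that
$$\pi((i,a))\,\pi((j,b))=V_aU_iV_bU_j=\langle i,b\rangle V_{a+b}U_{i+j}=\sigma((i,a),(j,b))\,\pi((i+j,a+b)),$$
which shows that $\pi$ extends linearly to an algebra homomorphism $C^*_\sigma(G)\to M_n(\mathbb{C})$. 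Since both sides have complex dimension $n^2$, it suffices to check that $\pi$ is injective, i.e.\ that $\{V_aU_i\}_{i\in H,\,a\in\widehat{H}}$ is a linearly independent family in $M_n(\mathbb{C})$; this follows from a short Hilbert--Schmidt orthogonality computation using orthogonality of characters on $H$.

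For (3), I would specialize the construction to $H=\mathbb{Z}_2$, obtaining $U_1=\left(\begin{smallmatrix}0&1\\1&0\end{smallmatrix}\right)$ and $V_{\chi_1}=\left(\begin{smallmatrix}1&0\\0&-1\end{smallmatrix}\right)$, so that the four standard basis elements $I,\,U_1,\,V_{\chi_1},\,V_{\chi_1}U_1$ of $\pi(C^*_\sigma(G))$ are respectively equal to $g_1,\,-ig_4,\,-ig_2,\,g_3$ in the Pauli list of Section 2. The main obstacle in the whole proof is really in (2): the sign and ordering conventions in the Heisenberg representation must be chosen so that the Weyl commutation reproduces exactly the cocycle $\langle i,b\rangle$, as the alternative choice $\pi((i,a))=U_iV_a$ would produce $\langle j,a\rangle$ instead, giving a cohomologous but syntactically distinct cocycle.
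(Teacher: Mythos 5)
Your argument for (2) is, up to notation, the paper's own: the map $\varphi(g_{ia})=\sum_k\langle k,a\rangle E_{k,k+i}$ used there is exactly a modulation composed with a translation on $\ell^2(H)$, so your Heisenberg-type construction is the same representation written operator-theoretically rather than in matrix units. The only real divergence is how bijectivity is settled: the paper exhibits an explicit inverse $\psi(E_{ij})=\frac{1}{n}\sum_b\langle i,b\rangle g_{j-i,-b}$, whereas you use a dimension count plus linear independence of the Weyl operators via character orthogonality; both work, and yours is arguably lighter. Parts (1) and (3) also match the paper (your identification $I=g_1$, $U_1=-ig_4$, $V_{\chi_1}=-ig_2$, $V_{\chi_1}U_1=g_3$ is correct).

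There is, however, one concrete slip of exactly the kind you flag at the end, and it sits in your displayed computation. With your convention $(U_i\xi)(x)=\xi(x-i)$, the commutation relation you state, $V_aU_j=\langle j,a\rangle U_jV_a$, is correct, but it yields $U_iV_b=\overline{\langle i,b\rangle}\,V_bU_i$, hence
$$V_aU_iV_bU_j=\overline{\langle i,b\rangle}\,V_{a+b}U_{i+j},$$
i.e.\ your $\pi$ represents the conjugate cocycle $\bar\sigma$, not $\sigma$. To get $\langle i,b\rangle$ with the ordering $\pi((i,a))=V_aU_i$ you must translate the other way, $(U_i\xi)(x)=\xi(x+i)$ --- which is precisely the paper's convention, since $\sum_kE_{k,k+i}$ sends $e_m$ to $e_{m-i}$. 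This is a one-character fix, but as written the displayed identity does not follow from the commutation relation you quote. A related caution about your closing remark: the cocycles $\langle i,b\rangle$ and $\langle j,a\rangle$ obtained from the two orderings have inverse antisymmetrizations, so they are not cohomologous in general (only when the pairing is $2$-torsion, as for $H=\mathbb Z_2$); they do of course both give algebras isomorphic to $M_n(\mathbb C)$.
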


\begin{proof}
These results are all well-known, the proof being as follows:

(1) The map $\sigma:G\times G\to\mathbb T$ is a bicharacter, and is therefore a 2-cocycle. 

(2) Consider the Hilbert space $l^2(H)\simeq\mathbb C^n$, and let $\{E_{ij}|i,j\in H\}$ be the standard basis of $\mathcal L(l^2(H))\simeq M_n(\mathbb C)$. We define a linear map, as follows:
$$\varphi:C^*_\sigma(G)\to M_n(\mathbb C)\quad,\quad
g_{ia}\to\sum_k<k,a>E_{k,k+i}$$

The fact that $\varphi$ is multiplicative follows from:
\begin{eqnarray*}
\varphi(g_{ia})\varphi(g_{jb})
&=&\sum_k<k,a>E_{k,k+i}\sum_{k'}<k'+i,b>E_{k'+i,k'+i+j}\\
&=&\sum_k<k,a+b><i,b>E_{k,k+i+j}\\
&=&<i,b>\varphi(g_{i+j,a+b})
=\varphi(g_{ia}g_{jb})
\end{eqnarray*}

Recall now that the involution of $C^*_\sigma(G)$ is the one making the canonical generators $g\in C^*_\sigma(G)$ unitaries. Since we have $g_{ia}g_{-i,-a}=<-i,a>g_{00}=<-i,a>$, it follows that we have $g_{ia}^*=<i,a>g_{-i,-a}$, and the involutivity check goes as follows:
$$\varphi(g_{ia})^*=\sum_k<k,-a>E_{k+i,k}=\sum_l<l-i,-a>E_{l,l-i}=\varphi(g_{ia}^*)$$

In order to prove the bijectivity of $\varphi$, consider the following linear map:
$$\psi:M_n(\mathbb C)\to C^*_\sigma(G)\quad,\quad
E_{ij}\to\frac{1}{n}\sum_b<i,b>g_{j-i,-b}$$

It is routine to check that $\varphi,\psi$ are inverse to each other, and this finishes the proof.

(3) Consider first an arbitrary cyclic group $H=\mathbb Z_n$, written additively. We have then an identification $\widehat{\mathbb Z}_n\simeq\mathbb Z_n$, with the coupling being $<i,a>=w^{ia}$, where  $w=e^{2\pi i/n}$. Thus the above cocycle, written $\sigma:\mathbb Z_n^2\times\mathbb Z_n^2\to\mathbb T$, is given by $\sigma((i,a),(j,b))=w^{ib}$, and we have an isomorphism $\varphi:C^*_\sigma(\mathbb Z_n^2)\simeq M_n(\mathbb C)$, the formula being:
$$\varphi(g_{ia})=\sum_kw^{ka}E_{k,k+i}$$

At $n=2$ now, the root of unity is $w=-1$, we have $\varphi(g_{ia})=\sum_k(-1)^{ka}E_{k,k+i}$, and $\varphi:C^*_\sigma(\mathbb Z_2^2)\simeq M_2(\mathbb C)$ maps therefore $g_{00},g_{01},g_{11},g_{10}$ to the following matrices:
$$g_1'=\begin{pmatrix}1&0\cr 0&1\end{pmatrix}\qquad
g_2'=\begin{pmatrix}1&0\cr 0&-1\end{pmatrix}\qquad
g_3'=\begin{pmatrix}0&1\cr -1&0\end{pmatrix}\qquad
g_4'=\begin{pmatrix}0&1\cr 1&0\end{pmatrix}$$

But these matrices are proportional, by factors $1,i,1,i$, to the Pauli matrices.
\end{proof}

We have now all the needed ingredients for generalizing the Pauli matrix construction. The result here, conceptually motivated by Proposition 2.6 above, is as follows:

\begin{theorem}
Given a $2$-cocycle $\sigma:G\times G\to\mathbb T$, we have a representation
$$\pi:C(S_N^+)\to C(U_B,\mathcal L(B))\quad:\quad w_{ij}\to[x\to Proj(g_ixg_j^*)]$$
where $\{g_1,\ldots,g_N\}\subset U_B$ is the standard basis of the algebra $B=C^*_\sigma(G)$. Moreover:
\begin{enumerate}
\item As an example, we can use $G=H\times\widehat{H}$, with $\sigma((i,a),(j,b))=<i,b>$.

\item For $G=\mathbb Z_2\times\mathbb Z_2$ with such a cocycle, we obtain the Pauli representation.

\item When the cocycle is trivial, we obtain the Fourier matrix representation.
\end{enumerate}
\end{theorem}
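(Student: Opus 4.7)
My plan is to verify that the given formula defines a magic unitary in $M_N(C(U_B, \mathcal L(B)))$, from which the announced representation follows immediately from the universal property of $C(S_N^+)$. Fix $x \in U_B$: since each $u_{ij}(x) = Proj(g_i x g_j^*)$ is a rank-one projection on the $N$-dimensional Hilbert space $(B, \langle a,b\rangle = tr(ab^*))$, it suffices to show that for each fixed $j$ the family $\{g_i x g_j^*\}_i$ is orthonormal, and similarly for each fixed $i$ the family $\{g_i x g_j^*\}_j$ is orthonormal. Then on each row and column we have $N$ pairwise orthogonal rank-one projections on an $N$-dimensional space, hence they sum to the identity.

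The orthogonality check uses only cyclicity of the trace, unitarity of $x$ and of the $g_k \in C^*_\sigma(G)$, and the fact (built into Proposition 2.6 and the discussion around Definition 2.5) that the standard basis $\{g_k\}$ of $C^*_\sigma(G)$ is orthonormal for $\langle a,b\rangle = tr(ab^*)$. Concretely,
$$\langle g_i x g_j^*, g_k x g_j^*\rangle = tr(g_i x g_j^* g_j x^* g_k^*) = tr(g_i g_k^*) = \delta_{ik},$$
and symmetrically, using cyclicity to cancel $g_i^* g_i$ and $x x^*$, $\langle g_i x g_j^*, g_i x g_l^*\rangle = tr(g_j^* g_l) = \delta_{jl}$. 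In particular each $g_i x g_j^*$ is a unit vector, so $Proj(\cdot)$ is unambiguous, and since these projections depend norm-continuously on $x$ the matrix $u = (u_{ij})$ genuinely lies in $M_N(C(U_B, \mathcal L(B)))$.

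For the three assertions at the end, my plan is to invoke the preceding propositions. Claim (1) is exactly Proposition 2.7(1). For (2), Proposition 2.7(3) identifies the standard basis of $C^*_\sigma(\mathbb Z_2 \times \mathbb Z_2)$ with scalar multiples of the Pauli matrices $g_1, \ldots, g_4$; since $Proj(\lambda\xi) = Proj(\xi)$ for $\lambda \in \mathbb T$, the resulting representation coincides with the one in Proposition 2.1. For (3), when $\sigma = 1$ the Fourier transform identifies $B = C^*(G)$ with $\mathbb C^N$ (for abelian $G$ of order $N$) and sends the standard basis $\{g_i\}$ to the rows of the Fourier matrix $F_G$, so the map $w_{ij} \mapsto [x \mapsto Proj(g_i x g_j^*)]$ becomes the deformed Fourier construction, specializing at $x = 1$ to Proposition 2.2(3). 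Nothing in this proof is genuinely difficult; the only step requiring any care is the trace normalization in the orthogonality computation, which I will need to confirm is consistent so that the row and column sums land exactly at $1$ rather than a nontrivial scalar multiple of it.
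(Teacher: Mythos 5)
Your proposal is correct and follows essentially the same route as the paper: the paper's proof simply cites Proposition 2.6 (orthogonality of the standard basis of $C^*_\sigma(G)$, which is what makes the rank-one projections on each row and column pairwise orthogonal, hence summing to $1$) together with Propositions 2.7 and 2.3 for the three examples, and your write-up just makes the trace/cyclicity computation explicit. The normalization worry you flag at the end is already resolved by your own computation (the case $k=i$ gives $\|g_ixg_j^*\|^2=tr(1)=1$).
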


\begin{proof}
The first assertion follows from Proposition 2.6 and its proof, (1) and (2) follow from Proposition 2.7, and (3) follows from Proposition 2.3.
\end{proof}

We should mention that the ``deformed Fourier'' representations in \cite{bb3} are as well of the form $\pi:C(S_N^+)\to C(U_B,\mathcal L(B))$, with $B=\mathbb C^{mn}$. Unifying these representations with those constructed above is an open question, that we would like to raise here. 

\section{Laws of characters}

In order to study the matrix model representations of type $\pi:C(S_N^+)\to M_K(C(X))$, we can use functional analytic technology from \cite{bfs}, \cite{wa2}. Assume indeed that $X$ is a compact probability space, so that the target algebra has a trace $tr:M_K(C(X))\to\mathbb C$, given by $tr(M)=\frac{1}{K}\sum_{i=1}^K\int_XM_{ii}(x)dx$. We have then the following result:

\begin{proposition}
Let $\pi:C(S_N^+)\to C(\mathcal G)\to M_K(C(X))$ be a Hopf image factorization, mapping $w_{ij}\to v_{ij}\to u_{ij}$, and let $\chi=\sum_iv_{ii}$.
\begin{enumerate}
\item $\int_\mathcal G=\lim_{k\to\infty}\frac{1}{k}\sum_{r=1}^k\int_\mathcal G^r$, with $\int_\mathcal G^r=(tr\circ\pi)^{*r}$, where $\phi*\psi=(\phi\otimes\psi)\Delta$.

\item $\int_\mathcal G^rv_{i_1j_1}\ldots v_{i_pj_p}=(T_p^r)_{i_1\ldots i_p,j_1\ldots j_p}$, where $(T_p)_{i_1\ldots i_p,j_1\ldots j_p}=tr(u_{i_1j_1}\ldots u_{i_pj_p})$.

\item The moments of $\chi$ with respect to $\int_\mathcal G^r$ are the numbers $c_p^r=Tr(T_p^r)$.
\end{enumerate}
\end{proposition}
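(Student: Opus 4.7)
The plan is to handle the three assertions in order: part (1) is the only one requiring genuine analytic input, while (2) and (3) are essentially bookkeeping on top of the comultiplication of $C(S_N^+)$ (inherited by $C(\mathcal G)$).

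For (1), the decisive fact is that by construction the middle object $C(\mathcal G)$ is the Hopf image, so the induced map $C(\mathcal G)\to M_K(C(X))$ is inner faithful. I would invoke the Cesaro-mean ergodic theorem for compact quantum groups (Woronowicz, in the form adapted to Hopf images in \cite{wa2}, \cite{bfs}): if $\phi = \psi\circ\pi$ with $\pi$ inner faithful and $\psi$ faithful on the target, then the Haar integral on $\mathcal G$ is the pointwise limit $\lim_{k\to\infty}\frac{1}{k}\sum_{r=1}^k \phi^{*r}$. Since $tr$ is faithful and tracial on $M_K(C(X))$, applying this to $\phi = tr\circ\pi$ yields the formula. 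This is the step I expect to be the main obstacle, since it is where one has to combine the definition of the Hopf image with Woronowicz's averaging argument; the other two parts are just algebra.

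For (2), I would iterate the coproduct $\Delta(v_{ij}) = \sum_k v_{ik}\otimes v_{kj}$. By multiplicativity,
$$\Delta(v_{i_1j_1}\cdots v_{i_pj_p}) = \sum_{k_1,\ldots,k_p} v_{i_1k_1}\cdots v_{i_pk_p}\otimes v_{k_1j_1}\cdots v_{k_pj_p}.$$
Setting $I = (i_1,\ldots,i_p)$, $J = (j_1,\ldots,j_p)$, $K = (k_1,\ldots,k_p)$, the right hand side of $(tr\circ\pi)^{*2}$ applied to the product reads $\sum_K (T_p)_{IK}(T_p)_{KJ} = (T_p^2)_{IJ}$, which is matrix multiplication of $T_p$ with itself on indices in $\{1,\ldots,N\}^p$. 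Iterating the argument $r$ times, or equivalently using coassociativity to write $(tr\circ\pi)^{*r}$ as $(tr\circ\pi)^{\otimes r}\circ \Delta^{(r-1)}$, produces $(T_p^r)_{IJ}$.

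For (3), specialize (2) to $I = J$ and sum: since $\chi^p = \sum_{i_1,\ldots,i_p} v_{i_1i_1}\cdots v_{i_pi_p}$, linearity gives
$$\int_{\mathcal G}^r \chi^p = \sum_I (T_p^r)_{II} = \operatorname{Tr}(T_p^r),$$
with $T_p$ viewed as an $N^p\times N^p$ matrix. No further input is needed.
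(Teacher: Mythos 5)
Your proposal is correct and follows the same route as the paper: part (1) is exactly the Ces\`aro convergence result for Hopf image factorizations from \cite{bfs} (non-parametric case) and \cite{wa2} (parametric case), which the paper simply cites, and your treatment of (2) via iterated comultiplication and of (3) via summing the diagonal entries matches the paper's ``elementary'' argument. No gaps.
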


\begin{proof}
The first assertion, which is the key one, was proved in \cite{bfs} in the case $X=\{.\}$, and then in \cite{wa2} in the general, parametric case. The second assertion is elementary, and the third one follows from it, by summing over indices $i_k=j_k$.
\end{proof}

As a main consequence, if we denote by $\mu,\mu^r$ the laws of the main character $\chi$ with respect to the Haar functional $\int_\mathcal G$, and with its truncated version $\int_\mathcal G^r=(tr\circ\pi)^{*r}$, we have a convergence in moments $\mu^r\to\mu$. Following now \cite{bb3}, we have:

\begin{proposition}
For a representation coming from a split matrix, $u_{ij}=Proj(e_if_j^*)$, the truncated measure $\mu^r$ is the law of the Gram matrix of the vectors
$$\xi_{i_1\ldots i_r}=e_{i_1}f_{i_2}^*\otimes e_{i_2}f_{i_3}^*\otimes\ldots\ldots\otimes e_{i_r}f_{i_1}^*$$
with respect to the normalized trace of the $N^r\times N^r$ matrices.
\end{proposition}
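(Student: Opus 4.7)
The plan is to combine Proposition 3.1(3) with a direct trace formula for products of rank-$1$ projections, then match the resulting cyclic sum against a moment of the Gram matrix by a transposition of index arrays.

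By Proposition 3.1(3), the $p$-th moment of $\mu^r$ is
$$Tr(T_p^r) = \sum_{i^{(1)}, \ldots, i^{(r)}} \prod_{s=1}^{r} (T_p)_{i^{(s)}, i^{(s+1)}},$$
with cyclic convention $i^{(r+1)} = i^{(1)}$ and each $i^{(s)}$ a tuple in $\{1, \ldots, N\}^p$. Since $u_{ij} = Proj(e_i f_j^*)$ and each $e_i f_j^*$ is a unit vector of $B$ (as $e_i, f_j$ are unitaries and the canonical trace satisfies $tr(1) = 1$), each $u_{ij}$ is a rank-$1$ projection onto a unit vector. A routine calculation then gives, for rank-$1$ projections $P_1, \ldots, P_p$ onto unit vectors $\xi_1, \ldots, \xi_p$, the cyclic trace formula
$$Tr(P_1 \cdots P_p) = \langle \xi_1, \xi_p \rangle \langle \xi_p, \xi_{p-1}\rangle \cdots \langle \xi_2, \xi_1\rangle.$$
Applied with $\xi_k = e_{i_k^{(s)}} f_{i_k^{(s+1)}}^*$ and using the normalization $tr = \frac{1}{N} Tr$, this renders $N^r Tr(T_p^r)$ as an explicit double product of inner products, indexed by the $r \times p$ array $\{i_k^{(s)}\}$ of integers in $\{1, \ldots, N\}$.

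In parallel, I would expand $Tr(G^p)$ for the Gram matrix $G$ of the vectors $\xi_\alpha$, which is of size $N^r \times N^r$. Because the inner product on the tensor product factors slot by slot,
$$G_{\alpha, \beta} = \prod_{k=1}^{r} \langle e_{\alpha_k} f_{\alpha_{k+1}}^*, e_{\beta_k} f_{\beta_{k+1}}^* \rangle,$$
and consequently
$$Tr(G^p) = \sum_{\alpha^{(1)}, \ldots, \alpha^{(p)}} \prod_{t=1}^p \prod_{k=1}^r \langle e_{\alpha_k^{(t)}} f_{\alpha_{k+1}^{(t)}}^*, e_{\alpha_k^{(t+1)}} f_{\alpha_{k+1}^{(t+1)}}^* \rangle,$$
which is a double product over the $p \times r$ index array $\{\alpha_k^{(t)}\}$.

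The last step is the identification: both sums are indexed by arrays of the same total size, and the substitution $i_k^{(s)} \leftrightarrow \alpha_s^{(k)}$ (transposing rows and columns) matches the two products up to swapping the two arguments of each inner product, i.e.\ up to complex conjugation of every summand. Since $G$ is positive, $Tr(G^p)$ is real, hence $N^r Tr(T_p^r) = Tr(G^p)$, and dividing by $N^r$ gives exactly the $p$-th moment of $G$ with respect to the normalized trace on $M_{N^r}(\mathbb{C})$, as claimed. The main obstacle is the combinatorial bookkeeping: keeping two nested cyclic conventions of periods $r$ and $p$ straight, and spotting the row/column transposition that matches the two index arrays.
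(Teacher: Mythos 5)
Your proposal is correct and follows essentially the same route as the paper: expand $Tr(T_p^r)$ via Proposition 3.1(3), apply the cyclic trace formula for products of rank-one projections onto unit vectors, and transpose the resulting $r\times p$ index array to recognize $\frac{1}{N^r}Tr(G^p)$. The only cosmetic difference is that the paper fixes the inner-product ordering by directly reordering factors, whereas you absorb the resulting complex conjugation by invoking the reality of $Tr(G^p)$; both are fine.
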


\begin{proof}
According to Proposition 3.1 (3), the moments of $\mu^r$ are given by:
\begin{eqnarray*}
c_p^r
&=&\sum_{i_1^1\ldots i_p^r}(T_p)_{i_1^1\ldots i_p^1,i_1^2\ldots i_p^2}(T_p)_{i_1^2\ldots i_p^2,i_1^3\ldots i_p^3}\ldots\ldots(T_p)_{i_1^r\ldots i_p^r,i_1^1\ldots i_p^1}\\
&=&\sum_{i_1^1\ldots i_p^r}tr(u_{i_1^1i_1^2}\ldots u_{i_p^1i_p^2})tr(u_{i_1^2i_1^3}\ldots u_{i_p^2i_p^3})\ldots\ldots tr(u_{i_1^ri_1^1}\ldots u_{i_p^ri_p^1})
\end{eqnarray*}

In the case of a split magic unitary, $u_{ij}=Proj(e_if_j^*)$, since the vectors $e_if_j^*$ are all of norm 1, with respect to the canonical scalar product, we therefore obtain:
\begin{eqnarray*}
c_p^r
&=&\frac{1}{N^r}\sum_{i_1^1\ldots i_p^r}<e_{i_1^1}f_{i_1^2}^*,e_{i_2^1}f_{i_2^2}^*>\ldots<e_{i_p^1}f_{i_p^2}^*,e_{i_1^1}f_{i_1^2}^*>\\
&&\hskip15.6mm<e_{i_1^2}f_{i_1^3}^*,e_{i_2^2}f_{i_2^3}^*>\ldots<e_{i_p^2}f_{i_p^3}^*,e_{i_1^2}f_{i_1^3}^*>\\
&&\hskip43mm\ldots\ldots\\
&&\hskip15.6mm<e_{i_1^r}f_{i_1^1}^*,e_{i_2^r}f_{i_2^1}^*>\ldots<e_{i_p^r}f_{i_p^1}^*,e_{i_1^r}f_{i_1^1}^*>
\end{eqnarray*}

Now by changing the order of the terms in the product, this gives:
\begin{eqnarray*}
c_p^r
&=&\frac{1}{N^r}\sum_{i_1^1\ldots i_p^r}<e_{i_1^1}f_{i_1^2}^*,e_{i_2^1}f_{i_2^2}^*><e_{i_1^2}f_{i_1^3}^*,e_{i_2^2}f_{i_2^3}^*>\ldots<e_{i_1^r}f_{i_1^1}^*,e_{i_2^r}f_{i_2^1}^*>\\
&&\hskip43mm\ldots\ldots\\
&&\hskip15.6mm<e_{i_p^1}f_{i_p^2}^*,e_{i_1^1}f_{i_1^2}^*><e_{i_p^2}f_{i_p^3}^*,e_{i_1^2}f_{i_1^3}^*>\ldots<e_{i_p^r}f_{i_p^1}^*,e_{i_1^r}f_{i_1^1}^*>
\end{eqnarray*}

In terms of the vectors $\xi_{i_1\ldots i_r}=e_{i_1}f_{i_2}^*\otimes\ldots\otimes e_{i_r}f_{i_1}^*$ in the statement, and then of their Gram matrix $G_{i_1\ldots i_r,j_1\ldots j_r}=<\xi_{i_1\ldots i_r},\xi_{j_1\ldots j_r}>$, we obtain the following formula:
\begin{eqnarray*}
c_p^r&=&\frac{1}{N^r}\sum_{i_1^1\ldots i_p^r}<\xi_{i_1^1\ldots i_1^r},\xi_{i_2^1\ldots i_2^r}>\ldots\ldots<\xi_{i_p^1\ldots i_p^r},\xi_{i_1^1\ldots i_1^r}>\\
&=&\frac{1}{N^r}\sum_{i_1^1\ldots i_p^r}G_{i_1^1\ldots i_1^r,i_2^1\ldots i_2^r}\ldots\ldots G_{i_p^1\ldots i_p^r,i_1^1\ldots i_1^r}\\
&=&\frac{1}{N^r}Tr(G^p)=tr(G^p)
\end{eqnarray*}

But this gives the formula in the statement, and we are done.
\end{proof}

In the fully split case now, we have the following result:

\begin{theorem}
For a representation coming from a fully split matrix, $u_{ij}=Proj(g_ixg_j^*)$, the truncated measure $\mu^r$ is the law of the Gram matrix of the vectors
$$\xi_{i_1\ldots i_r}^{x_1\ldots x_r}=g_{i_1}x_1g_{i_2}^*\otimes g_{i_2}x_2g_{i_3}^*\otimes\ldots\ldots\otimes g_{i_r}x_rg_{i_1}^*$$
with respect to the usual integration over $M_{N^r}(C(U_B^r))$.
\end{theorem}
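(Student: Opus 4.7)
The plan is to adapt the proof of Proposition 3.2, the new ingredient being that each of the $r$ factors of $\int_{\mathcal G}^r=(tr\circ\pi)^{*r}$ carries its own trace $tr:M_N(C(U_B))\to\mathbb C$, and hence its own integration over $U_B$. Writing these $r$ independent integrations out by Fubini replicates the parameter $x$ into independent variables $x_1,\ldots,x_r\in U_B$.

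Concretely, I would start from Proposition 3.1(3) and expand
\[
c_p^r = Tr(T_p^r) = \sum_{I_1,\ldots,I_r}\int_{U_B^r} \prod_{k=1}^r tr_{M_N}\!\bigl( u_{i_1^k i_1^{k+1}}(x_k)\cdots u_{i_p^k i_p^{k+1}}(x_k) \bigr)\, dx_1\cdots dx_r,
\]
with multi-indices $I_k=(i_1^k,\ldots,i_p^k)$ read cyclically in $k$. Applying the standard rank-$1$ trace identity
\[
tr_{M_N}\!\bigl(Proj(\eta_1)\cdots Proj(\eta_p)\bigr) = \frac{1}{N}\,\langle \eta_1,\eta_2\rangle\cdots\langle \eta_p,\eta_1\rangle
\]
to the unit vectors $\eta_m=g_{i_m^k}x_k g_{i_m^{k+1}}^*$ then converts each integrand into a cyclic product of $B$-inner products, with an overall factor of $N^{-r}$.

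The central combinatorial step, imported from the proof of Proposition 3.2, is to interchange the two products, grouping first by the horizontal index $m$ and then by $k$. For fixed $m$, the product over $k$
\[
\prod_{k=1}^r \bigl\langle g_{i_m^k} x_k g_{i_m^{k+1}}^*,\, g_{i_{m+1}^k} x_k g_{i_{m+1}^{k+1}}^* \bigr\rangle
\]
is by definition the inner product $\langle \xi^{x_1\ldots x_r}_{i_m^1\ldots i_m^r},\, \xi^{x_1\ldots x_r}_{i_{m+1}^1\ldots i_{m+1}^r}\rangle$, i.e.\ an entry $G_{I_m,I_{m+1}}(x_1,\ldots,x_r)$ of the random Gram matrix $G\in M_{N^r}(C(U_B^r))$. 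Summing the resulting cyclic product $G_{I_1 I_2}\cdots G_{I_p I_1}$ over the multi-indices $I_1,\ldots,I_p$ and integrating in $x_1,\ldots,x_r$ yields $\int_{U_B^r} tr_{M_{N^r}}(G(x_1,\ldots,x_r)^p)\,dx_1\cdots dx_r$, which is exactly the $p$-th moment of $G$ with respect to the usual integration on $M_{N^r}(C(U_B^r))$.

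The main obstacle I anticipate is purely bookkeeping: keeping the three index families straight (the $p$ horizontal indices, the $r$ convolution-time indices, and the $r$ parameters $x_k$), and checking that the $N^{-r}$ arising from the rank-$1$ trace identities combines with the unnormalized $Tr$ on $M_{N^r}$ to give the normalized trace $tr_{M_{N^r}}$. Once the reordering is carried out correctly, the identification with the Gram matrix is forced by the tensor-product definition of $\xi^{x_1\ldots x_r}_{i_1\ldots i_r}$.
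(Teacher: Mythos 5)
Your proposal is correct and follows essentially the same route as the paper's own proof: expand $c_p^r=Tr(T_p^r)$ via Proposition 3.1, apply the rank-one trace identity to get cyclic products of inner products with an overall $N^{-r}$, reorder the double product to recognize the Gram matrix entries $G_{I_m,I_{m+1}}^{x_1\ldots x_r}$, and conclude $c_p^r=\int_{U_B^r}tr((G^{x_1\ldots x_r})^p)\,dx$. The only cosmetic difference is that you make explicit, via Fubini, why the single parameter $x$ replicates into $r$ independent variables, which the paper leaves implicit.
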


\begin{proof}
The idea is that the computations in the proof of Proposition 3.2 apply, with $e_i=g_ix$ and $f_i=g_i$, and with an integral $\int_{U_B^r}$ added. To be more precise, we can start with the same formula as there, stating that the moments of $\mu^r$ are given by:
$$c_p^r=\sum_{i_1^1\ldots i_p^r}tr(u_{i_1^1i_1^2}\ldots u_{i_p^1i_p^2})\ldots\ldots tr(u_{i_1^ri_1^1}\ldots u_{i_p^ri_p^1})$$

In the case of a fully split matrix, $u_{ij}=Proj(g_ixg_j^*)$, since the vectors $g_ixg_j^*$ are all of norm 1, we therefore obtain:
\begin{eqnarray*}
c_p^r
&=&\frac{1}{N^r}\sum_{i_1^1\ldots i_p^r}\int_{U_B}<g_{i_1^1}x_1g_{i_1^2}^*,g_{i_2^1}x_1g_{i_2^2}^*>\ldots<g_{i_p^1}x_1g_{i_p^2}^*,g_{i_1^1}x_1g_{i_1^2}^*>dx_1\\
&&\hskip58mm\ldots\ldots\\
&&\hskip15.6mm\int_{U_B}<g_{i_1^r}x_rg_{i_1^1}^*,g_{i_2^r}x_rg_{i_2^1}^*>\ldots<g_{i_p^r}x_rg_{i_p^1}^*,g_{i_1^r}x_rg_{i_1^1}^*>dx_r
\end{eqnarray*}

Now by changing the order of the terms in the product, this gives:
\begin{eqnarray*}
c_p^r
&=&\frac{1}{N^r}\sum_{i_1^1\ldots i_p^r}\int_{U_B^r}<g_{i_1^1}x_1g_{i_1^2}^*,g_{i_2^1}x_1g_{i_2^2}^*>\ldots<g_{i_1^r}x_rg_{i_1^1}^*,g_{i_2^r}x_rg_{i_2^1}^*>\\
&&\hskip58mm\ldots\ldots\\
&&\hskip23mm<g_{i_p^1}x_1g_{i_p^2}^*,g_{i_1^1}x_1g_{i_1^2}^*>\ldots<g_{i_p^r}x_rg_{i_p^1}^*,g_{i_1^r}x_rg_{i_1^1}^*>dx
\end{eqnarray*}

In terms of the vectors $\xi_{i_1\ldots i_r}^{x_1\ldots x_r}=g_{i_1}x_1g_{i_2}^*\otimes\ldots\ldots\otimes g_{i_r}x_rg_{i_1}^*$  in the statement, and then of their Gram matrix $G_{i_1\ldots i_r,j_1\ldots j_r}^{x_1\ldots x_r}=<\xi_{i_1\ldots i_r}^{x_1\ldots x_r},\xi_{j_1\ldots j_r}^{x_1\ldots x_r}>$, we therefore obtain:
\begin{eqnarray*}
c_p^r&=&\frac{1}{N^r}\int_{U_B^r}\sum_{i_1^1\ldots i_p^r}<\xi_{i_1^1\ldots i_1^r}^{x_1\ldots x_r},\xi_{i_2^1\ldots i_2^r}^{x_1\ldots x_r}>\ldots\ldots<\xi_{i_p^1\ldots i_p^r}^{x_1\ldots x_r},\xi_{i_1^1\ldots i_1^r}^{x_1\ldots x_r}>dx\\
&=&\frac{1}{N^r}\int_{U_B^r}\sum_{i_1^1\ldots i_p^r}G_{i_1^1\ldots i_1^r,i_2^1\ldots i_2^r}^{x_1\ldots x_r}\ldots\ldots G_{i_p^1\ldots i_p^r,i_1^1\ldots i_1^r}^{x_1\ldots x_r}dx\\
&=&\frac{1}{N^r}\int_{U_B^r}Tr((G^{x_1\ldots x_r})^p)dx\\
&=&\int_{U_B^r}tr((G^{x_1\ldots x_r})^p)dx
\end{eqnarray*}

But this gives the formula in the statement, and we are done.
\end{proof}

\section{Cocyclic abelian models}

Let us go back now to the cocyclic abelian models, from Theorem 2.8 (1) above. We will explicitely compute the law of the main character, for these models.

By appplying the general formula in Theorem 3.3, we first have:

\begin{proposition}
For the representation $\pi:C(S_{n^2}^+)\to C(U_n,M_{n^2}(\mathbb C))$ coming from an abelian group $H$, with $|H|=n$, the truncated measure $\mu^r$ is the law of the matrix
\begin{eqnarray*}
G_{i_1a_1\ldots i_ra_r,j_1b_1\ldots j_rb_r}^{x_1\ldots x_r}
&=&\frac{1}{n^r}\sum_{p_1\ldots p_r}\sum_{s_1\ldots s_r}<p_1-s_r,a_1-b_1>\ldots<p_r-s_{r-1},a_r-b_r>\\
&&(x_1)_{p_1+i_1,s_1+i_2}\ldots(x_r)_{p_r+i_r,s_r+i_1}\cdot 
(\bar{x}_1)_{p_1+j_1,s_1+j_2}\ldots(\bar{x}_r)_{p_r+j_r,s_r+j_1}
\end{eqnarray*}
with respect to the usual integration over $M_{n^{2r}}(C(U_n^r))$.
\end{proposition}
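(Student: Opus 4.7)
The plan is to apply Theorem~3.3 with $B=C^*_\sigma(G)\simeq M_n(\mathbb C)$, the indices $i_k,j_k$ of that theorem being replaced by the pairs $(i_k,a_k),(j_k,b_k)\in H\times\widehat{H}$. The isomorphism $\varphi$ from the proof of Proposition 2.7 identifies the standard generator $g_{ia}$ with the matrix $M_{ia}=\sum_k\langle k,a\rangle E_{k,k+i}$, so that $(M_{ia})_{p,q}=\langle p,a\rangle\delta_{q,p+i}$ and, dually, $(M_{ia}^*)_{p,q}=\overline{\langle q,a\rangle}\delta_{p,q+i}$. From this one reads off the matrix entries of the building block $g_{ia}\,x\,g_{jb}^*$, namely
\[
(M_{ia}\,x\,M_{jb}^*)_{p,q}=\langle p,a\rangle\overline{\langle q,b\rangle}\,x_{p+i,\,q+j},
\]
so each such matrix is ``sparse up to a character'': only the data of $x$ at the shifted position matters, weighted by a pairing.

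Next I would compute the inner product of the $k$-th tensor factor, using the normalized trace $tr$ on $M_n(\mathbb C)$ and $\langle A,B\rangle=tr(AB^*)$. With $i=i_k,a=a_k,j=i_{k+1},b=a_{k+1}$ on the left and the analogous primed quantities $j_k,b_k,j_{k+1},b_{k+1}$ on the right, the displayed entry formula above yields
\[
\langle g_{i_k a_k}x_k g_{i_{k+1}a_{k+1}}^*,\,g_{j_k b_k}x_k g_{j_{k+1}b_{k+1}}^*\rangle
=\frac{1}{n}\sum_{p_k,s_k}\langle p_k,a_k-b_k\rangle\langle s_k,b_{k+1}-a_{k+1}\rangle\,(x_k)_{p_k+i_k,s_k+i_{k+1}}\overline{(x_k)_{p_k+j_k,s_k+j_{k+1}}},
\]
where I have renamed the two matrix-summation indices as $p_k$ and $s_k$ to anticipate the global notation, and used that $\overline{\langle t,c\rangle}=\langle t,-c\rangle$. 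Here and below all indices are read cyclically: $i_{r+1}=i_1$, $j_{r+1}=j_1$, $s_0=s_r$.

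The key bookkeeping step is then to take the product of these $r$ inner products and regroup the characters. The $x$-factor of the product is already in the exact shape appearing in the statement. For the character part, the product is $\prod_k\langle p_k,a_k-b_k\rangle\langle s_k,b_{k+1}-a_{k+1}\rangle$, and I would shift the index in the $s$-product by $k\mapsto k-1$, turning it into $\prod_k\langle s_{k-1},b_k-a_k\rangle$. Combining with the $p$-product via bilinearity of the pairing,
\[
\langle p_k,a_k-b_k\rangle\langle s_{k-1},b_k-a_k\rangle=\langle p_k-s_{k-1},a_k-b_k\rangle,
\]
which gives exactly the character weight appearing in the statement. Together with the prefactor $1/n^r$ coming from the $r$ normalized traces, this yields the announced formula.

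The computation is essentially linear algebra plus cocyclic bookkeeping, and the only real obstacle is the cyclic reindexing of the $s$-variables that merges the two families of characters into the single compact form $\langle p_k-s_{k-1},a_k-b_k\rangle$; everything else is a direct substitution into Theorem~3.3.
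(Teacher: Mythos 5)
Your proposal is correct and follows essentially the same route as the paper: substitute into Theorem 3.3, realize $C^*_\sigma(G)\simeq M_n(\mathbb C)$ via the generators $g_{ia}\mapsto\sum_k\langle k,a\rangle E_{k,k+i}$, compute each scalar product $\langle g_{ia}xg_{jb}^*,g_{kc}xg_{ld}^*\rangle=\frac{1}{n}\sum_{p,s}\langle p,a-c\rangle\langle s,d-b\rangle x_{p+i,s+j}\bar{x}_{p+k,s+l}$, and regroup the characters cyclically. Your entrywise computation of $g_{ia}xg_{jb}^*$ and the explicit cyclic reindexing $s_k\to s_{k-1}$ are just a slightly more organized presentation of the same calculation.
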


\begin{proof}
We use the general formula found in Theorem 3.3 above. The Gram matrix that we are interested in, having now double indices, is given by:
\begin{eqnarray*}
G_{i_1a_1\ldots i_ra_r,j_1b_1\ldots j_rb_r}^{x_1\ldots x_r}
&=&<\xi_{i_1a_1\ldots i_ra_r}^{x_1\ldots x_r},\xi_{j_1b_1\ldots j_rb_r}^{x_1\ldots x_r}>\\
&=&<g_{i_1a_1}x_1g_{i_2a_2}^*,g_{j_1b_1}x_1g_{j_2b_2}^*>\ldots <g_{i_ra_r}x_rg_{i_1a_1}^*,g_{j_rb_r}x_rg_{j_1b_1}^*>
\end{eqnarray*}

In the case of a cocyclic abelian model, as in the statement, we can use for computations the isomorphism found in the proof of Proposition 2.7, namely:
$$C^*_\sigma(G)\simeq M_n(\mathbb C)\quad:\quad g_{ia}\to\sum_k<k,a>E_{k,k+i}$$

With this identification made, the scalar products can be computed as follows:
\begin{eqnarray*}
&&<g_{ia}xg_{jb}^*,g_{kc}xg_{ld}^*>\\
&=&tr(g_{ia}xg_{jb}^*g_{ld}x^*g_{kc}^*)\\
&=&\frac{1}{n}\sum_{pqrstu}(g_{ia})_{pq}x_{qr}(g_{jb}^*)_{rs}(g_{ld})_{st}(x^*)_{tu}(c_{kc}^*)_{up}\\
&=&\frac{1}{n}\sum_{pqrstu}\delta_{p+i,q}<p,a>x_{qr}\delta_{s+j,r}\overline{<s,b>}\delta_{s+l,t}<s,d>\bar{x}_{ut}\delta_{p+k,u}\overline{<p,c>}\\
&=&\frac{1}{n}\sum_{ps}<p,a-c><s,d-b>x_{p+i,s+j}\bar{x}_{p+k,s+l}
\end{eqnarray*}

Thus the Gram matrix that we are interested in is given by:
\begin{eqnarray*}
G_{i_1a_1\ldots i_ra_r,j_1b_1\ldots j_rb_r}^{x_1\ldots x_r}
&=&\frac{1}{n}\sum_{p_1s_1}<p_1,a_1-b_1><s_1,b_2-a_2>(x_1)_{p_1+i_1,s_1+i_2}(\bar{x}_1)_{p_1+j_1,s_1+j_2}\\
&&\hskip22mm\ldots\ldots\\
&&\frac{1}{n}\sum_{p_rs_r}<p_r,a_r-b_r><s_r,b_1-a_1>(x_r)_{p_r+i_r,s_r+i_1}(\bar{x}_r)_{p_r+j_r,s_r+j_1}
\end{eqnarray*}

But this gives the formula in the statement, and we are done.
\end{proof}

The point now is that the Gram matrix in Proposition 4.1 is circulant, and so is diagonal in Fourier transform. By diagonalizing it, we obtain the following result:

\begin{proposition}
For the representation $C(S_{n^2}^+)\to C(U_n,M_{n^2}(\mathbb C))$ as above, the measure $\mu^r$ is the law of the diagonal random matrix
$$\Lambda^{x_1\ldots x_r}_{k_1c_1\ldots k_rc_r}=\Big|Tr(W_{k_1c_1}x_1\ldots W_{k_rc_r}x_r)\Big|^2$$
over $U_n^r$, where $W_{kc}:e_i\to<k,i>e_{i+c}$ are the standard unitaries of $C^*_\sigma(\mathbb Z_n^2)\simeq M_n(\mathbb C)$.
\end{proposition}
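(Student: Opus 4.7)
The plan is to diagonalize the Gram matrix $G^{x_1\ldots x_r}$ from Proposition 4.1 by Fourier transform on the abelian group $(\mathbb Z_n^2)^r$, and then identify the eigenvalues with the stated traces. First, I would verify directly from the formula in Proposition 4.1 that $G^x_{(i,a),(j,b)}$ depends only on the differences $(i_l-j_l,\,a_l-b_l)_l$: the phases $\langle p_l-s_{l-1},a_l-b_l\rangle$ are invariant under simultaneous translation $a_l\to a_l+e_l$, $b_l\to b_l+e_l$, and the amplitudes are invariant under simultaneous translation $i_l\to i_l+h_l$, $j_l\to j_l+h_l$ once one absorbs the shift by the substitution $p_l\to p_l-h_l$, $s_l\to s_l-h_{l+1}$ (under which $p_l-s_{l-1}$ is also preserved). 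Hence $G^x$ is circulant.

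Being circulant, $G^x$ is diagonalized by the characters $\chi_{(k,c)}(i,a)=\prod_l\langle k_l,i_l\rangle\langle c_l,a_l\rangle$ of $(\mathbb Z_n^2)^r$. Writing $G^x=V^*V$ with $V e_{(i,a)}=\xi^x_{(i,a)}$ as in the proof of Proposition 3.3, and using the normalized eigenvector $v^{(k,c)}=\chi_{(k,c)}/n^r$, the Parseval identity gives
\[
\lambda^x_{(k,c)}=\langle v^{(k,c)},G^xv^{(k,c)}\rangle=\bigl\|\eta^x_{(k,c)}\bigr\|^2,\qquad \eta^x_{(k,c)}:=\frac{1}{n^r}\sum_{(i,a)}\chi_{(k,c)}(i,a)\,\xi^x_{(i,a)}.
\]
It remains to show $\|\eta^x_{(k,c)}\|^2=|Tr(W_{k_1c_1}x_1\cdots W_{k_rc_r}x_r)|^2$.

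To do this, I would use the explicit isomorphism $\varphi(g_{ia})=\sum_k\langle k,a\rangle E_{k,k+i}$ from the proof of Proposition 2.7 to write the matrix entries of $\xi^x_{(i,a)}\in M_n^{\otimes r}$ as
\[
(\xi^x_{(i,a)})_{K^{(1)},K^{(2)}}=\prod_l\langle k_l^{(1)},a_l\rangle\overline{\langle k_l^{(2)},a_{l+1}\rangle}(x_l)_{k_l^{(1)}+i_l,\,k_l^{(2)}+i_{l+1}}.
\]
Summing against $\chi_{(k,c)}(i,a)$, perform the $a_l$-sums first: each $a_l$ appears in the three characters $\langle c_l,a_l\rangle$, $\langle k_l^{(1)},a_l\rangle$ and $\overline{\langle k_{l-1}^{(2)},a_l\rangle}$, whence $\sum_{a_l}\langle c_l+k_l^{(1)}-k_{l-1}^{(2)},a_l\rangle=n\,\delta_{k_l^{(1)},\,k_{l-1}^{(2)}-c_l}$. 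After imposing these deltas and substituting $\mu_l=k_l^{(1)}+i_l$ (so that $k_l^{(2)}+i_{l+1}$ becomes $\mu_{l+1}+c_{l+1}$), the $i$-sum collapses the product into
\[
T(k,c,x):=\sum_\mu\prod_l\langle k_l,\mu_l\rangle (x_l)_{\mu_l,\,\mu_{l+1}+c_{l+1}}=Tr(W_{k_1c_1}x_1\cdots W_{k_rc_r}x_r),
\]
the second equality following from $(W_{kc}y)_{m,j}=\langle k,m-c\rangle y_{m-c,j}$. The net result is that $(\eta^x_{(k,c)})_{K^{(1)},K^{(2)}}$ equals a unit-modulus phase times $T(k,c,x)$ on the $n^r$ index pairs satisfying $k_l^{(1)}=k_{l-1}^{(2)}-c_l$ for all $l$, and vanishes elsewhere. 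Taking the HS norm with $tr=\frac{1}{n^r}Tr$ on $M_{n^r}$ gives $\|\eta^x_{(k,c)}\|^2=\frac{1}{n^r}\cdot n^r\cdot|T(k,c,x)|^2=|Tr(W_{k_1c_1}x_1\cdots W_{k_rc_r}x_r)|^2$, as claimed.

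The main obstacle is the bookkeeping in the third step: one must carefully track how the cyclic indices $(i_l,a_l)$ are shared between adjacent tensor factors (through the $g_{i_la_l}$ on the left of factor $l$ and the $g_{i_la_l}^*$ on the right of factor $l-1$), and recognize that the substitution $\mu_l=k_l^{(1)}+i_l$ combined with the delta $k_l^{(1)}=k_{l-1}^{(2)}-c_l$ is exactly what is needed to assemble the interacting matrix entries into one cyclic trace with Weyl matrices inserted at the correct positions. Once this identification is made, the equality of eigenvalues yields the equality of laws of $G^x$ and $\Lambda^x$, proving the proposition.
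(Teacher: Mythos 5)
Your proof is correct and takes essentially the same route as the paper: both exploit the circulant structure of the Gram matrix from Proposition 4.1 and diagonalize it by the Fourier transform of $(\mathbb Z_n^2)^r$, identifying the eigenvalues with $|Tr(W_{k_1c_1}x_1\cdots W_{k_rc_r}x_r)|^2$ via the same character sums. The only (harmless) difference is organizational: you compute each eigenvalue as the squared norm of the Fourier transform of the vectors $\xi^x_{(i,a)}$, using $G^x=V^*V$, whereas the paper conjugates the explicit Gram-matrix formula by $F^{\otimes 2r}$ and reads off the diagonal directly.
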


\begin{proof}
As already mentioned, the idea will be that of applying a discrete Fourier transform. With $F_{ij}=\frac{1}{\sqrt{n}}<i,j>$, having as inverse $\bar{F}_{ij}=\frac{1}{\sqrt{n}}<-i,j>$, we have:
\begin{eqnarray*}
(F^{\otimes 2r}G^x\bar{F}^{\otimes 2r})_{kc,ld}
&=&\sum_{ijab}(F^{\otimes 2r})_{kc,ij}G^x_{ia,jb}(\bar{F}^{\otimes 2r})_{jb,ld}\\
&=&\frac{1}{n^{3r}}\sum_{ijabps}<k_1,i_1>\ldots<k_r,i_r><c_1,a_1>\ldots<c_r,a_r>\\
&&<-j_1,l_1>\ldots<-j_r,l_r><-b_1,d_1>\ldots<-b_r,d_r>\\
&&<p_1-s_r,a_1-b_1>\ldots<p_r-s_{r-1},a_r-b_r>\\
&&(x_1)_{p_1+i_1,s_1+i_2}\ldots(x_r)_{p_r+i_r,s_r+i_1} 
\cdot(\bar{x}_1)_{p_1+j_1,s_1+j_2}\ldots(\bar{x}_r)_{p_r+j_r,s_r+j_1}
\end{eqnarray*}

We can rewrite this formula in the following way:
\begin{eqnarray*}
(F^{\otimes 2r}G^x\bar{F}^{\otimes 2r})_{kc,ld}
&=&\frac{1}{n^r}\sum_{ijps}<k_1,i_1>\ldots<k_r,i_r><-j_1,l_1>\ldots<-j_r,l_r>\\
&&(x_1)_{p_1+i_1,s_1+i_2}\ldots(x_r)_{p_r+i_r,s_r+i_1}\cdot 
(\bar{x}_1)_{p_1+j_1,s_1+j_2}\ldots(\bar{x}_r)_{p_r+j_r,s_r+j_1}\\
&&\frac{1}{n}\sum_{a_1}<c_1+p_1-s_r,a_1>\ldots\frac{1}{n}\sum_{a_r}<c_r+p_r-s_{r-1},a_r>\\
&&\frac{1}{n}\sum_{b_1}<d_1+p_1-s_r,-b_1>\ldots\frac{1}{n}\sum_{b_r}<d_r+p_r-s_{r-1},-b_r>
\end{eqnarray*}

By summing over $a_i,b_i$, we must have $c_i=d_i$ and $s_{i-1}=c_i+p_i$. By changing the indices of summation, $i_x\to i_x-p_x$ and $j_x\to j_x-p_x$, we obtain:
\begin{eqnarray*}
(F^{\otimes 2r}G^x\bar{F}^{\otimes 2r})_{kc,ld}
&=&\frac{1}{n^r}\delta_{cd}\sum_{ijp}<k_1,i_1-p_1>\ldots<k_r,i_r-p_r>\\
&&<p_1-j_1,l_1>\ldots<p_r-j_r,l_r>\\
&&(x_1)_{i_1,i_2+c_2}\ldots(x_r)_{i_r,i_1+c_1}\cdot 
(\bar{x}_1)_{j_1,j_2+c_2}\ldots(\bar{x}_r)_{j_r,j_1+c_1}\\
&=&\delta_{cd}\sum_{ij}<k_1,i_1>\ldots<k_r,i_r><-j_1,l_1>\ldots<-j_r,l_r>\\
&&(x_1)_{i_1,i_2+c_2}\ldots(x_r)_{i_r,i_1+c_1}\cdot(\bar{x}_1)_{j_1,j_2+c_2}\ldots(\bar{x}_r)_{j_r,j_1+c_1}\\
&&\frac{1}{n}\sum_{p_1}<p_1,l_1-k_1>\ldots\frac{1}{n}\sum_{p_r}<p_r,l_r-k_r>\\
&=&\delta_{kl}\delta_{cd}\sum_{ij}<k_1,i_1-j_1>\ldots<k_r,i_r-j_r>\\
&&(x_1)_{i_1,i_2+c_2}\ldots(x_r)_{i_r,i_1+c_1}\cdot(\bar{x}_1)_{j_1,j_2+c_2}\ldots(\bar{x}_r)_{j_r,j_1+c_1}
\end{eqnarray*}

We conclude that $\mu^r$ is the law of the following diagonal random matrix:
$$\Lambda^{x_1\ldots x_r}_{k_1c_1\ldots k_rc_r}=\Big|\sum_i<k_1,i_1>\ldots<k_r,i_r>(x_1)_{i_1,i_2+c_2}\ldots(x_r)_{i_r,i_1+c_1}\Big|^2$$

Now observe that we have $<k,i>x_{i,j+c}=(A_kxB_c)_{ij}$, where $A_k:e_i\to<k,i>e_i$ and $B_c:e_i\to e_{i+c}$. In addition, we have $B_cA_k=W_{kc}$, and this gives:
\begin{eqnarray*}
\Lambda^{x_1\ldots x_r}_{k_1c_1\ldots k_rc_r}
&=&\Big|\sum_i(A_{k_1}x_1B_{c_2})_{i_1i_2}\ldots(A_{k_r}x_rB_{c_1})_{i_ri_1}\Big|^2
=\Big|Tr(A_{k_1}x_1B_{c_2}\ldots A_{k_r}x_rB_{c_1})\Big|^2\\
&=&\Big|Tr(B_{c_1}A_{k_1}x_1B_{c_2}\ldots A_{k_r}x_r)\Big|^2
=\Big|Tr(W_{k_1c_1}x_1\ldots W_{k_rc_r}x_r)\Big|^2
\end{eqnarray*}

Thus, we have obtained the formula in the statement.
\end{proof}

By making now some final manipulations, of probabilistic nature, everything simplifies in the formula in Proposition 4.2, and we obtain the following result:

\begin{theorem}
For a representation $\pi:C(S_{n^2}^+)\to C(U_n,M_{n^2}(\mathbb C))$ coming from an abelian group $H$, with $|H|=n$, all the measures $\mu^r$ are the laws of the following variable:
$$(x\in U_n)\to\Big|Tr(x)\Big|^2$$
In particular, $\mu$ coincides with the law of the main character of $PU_n=U_n/\mathbb T$.
\end{theorem}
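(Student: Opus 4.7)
The plan is to compute the moments of $\mu^r$ directly from Proposition 4.2 and to watch two applications of Haar invariance on $U_n$ collapse everything in sight.

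First I would write out the $p$-th moment of $\mu^r$ as the combined average of $\Lambda^p$ over its diagonal indices (via the normalized trace on $M_{n^{2r}}$) and over $U_n^r$ (via Haar measure):
$$\int t^p\,d\mu^r(t)=\frac{1}{n^{2r}}\sum_{k_1c_1\ldots k_rc_r}\int_{U_n^r}\bigl|Tr(W_{k_1c_1}x_1\ldots W_{k_rc_r}x_r)\bigr|^{2p}\,dx_1\ldots dx_r.$$
The key observation is that each $W_{kc}$, being the image of $g_{kc}$ under the isomorphism $C^*_\sigma(\mathbb Z_n^2)\simeq M_n(\mathbb C)$, is a unitary. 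By left-invariance of Haar measure on $U_n$ I may substitute $x_i\mapsto W_{k_ic_i}^{-1}x_i$ for each $i$ independently; the integrand then becomes $|Tr(x_1\ldots x_r)|^{2p}$, independent of the indices $(k_ic_i)$. The $n^{2r}$ terms of the sum all agree, exactly cancelling the normalization $n^{-2r}$, and we obtain
$$\int t^p\,d\mu^r(t)=\int_{U_n^r}|Tr(x_1\ldots x_r)|^{2p}\,dx.$$

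A second use of Haar invariance — for instance the substitution $x_1\mapsto x_1x_r^{-1}\ldots x_2^{-1}$ — shows that $x_1\ldots x_r$ is itself Haar distributed on $U_n$, so
$$\int t^p\,d\mu^r(t)=\int_{U_n}|Tr(x)|^{2p}\,dx,$$
which is the $p$-th moment of the law of $x\to|Tr(x)|^2$. Hence $\mu^r$ is in fact independent of $r$ and equals this law; combining with the convergence $\mu^r\to\mu$ from the discussion after Proposition 3.1 yields the main assertion. For the final sentence, observe that $|Tr|^2$ descends from $U_n$ to the quotient $PU_n=U_n/\mathbb T$ (the scalar factor cancels in $Tr\cdot\overline{Tr}$) and corresponds there to the main character, so $\mu$ is precisely its law.

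The main obstacle is really conceptual rather than computational: one must recognize at the outset that both the sum over the diagonal indices $(k_ic_i)$ and the independent Haar averages on the factors $U_n$ are exactly the invariance structure needed to absorb the Weyl-type unitaries $W_{k_ic_i}$. Once that is in place, the proof reduces to two elementary substitutions, and all the apparent complexity of the Gram matrix formula in Proposition 4.1 evaporates.
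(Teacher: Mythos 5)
Your proposal is correct and follows essentially the same route as the paper: the explicit substitution $x_i\mapsto W_{k_ic_i}^{-1}x_i$ is precisely the paper's step of ``absorbing'' the Weyl matrices into the Haar unitaries, and the second substitution is the paper's observation that $x_1\ldots x_r$ is Haar distributed. Your version merely makes the invariance arguments explicit at the level of moments, which is a welcome clarification but not a different proof.
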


\begin{proof}
We use the formula in Proposition 4.2 above. Observe first that the matrices $W_{kc}:e_i\to<k,i>e_{i+c}$ appearing there, called Weyl matrices, satisfy: 
\begin{eqnarray*}
W_{ia}^*&=&<i,a>W_{-i,-a}\\
W_{ia}W_{jb}&=&<i,b>W_{i+j,a+b}\\
W_{ia}W_{jb}^*&=&<j-i,b>W_{i-j,a-b}
\end{eqnarray*}

This is indeed already known from the cocyclic picture, and can be checked as well directly. Consider now the following group, obtained by tensoring such matrices:
$$W=\left\{W_{k_1c_1}\otimes\ldots\otimes W_{k_rc_r}\Big|k_i,c_i\in H\right\}$$

With these notions in hand, Proposition 4.2 tells us that $\mu^r$ appears as average over the above Weyl group $W$ of the laws of the following variables: 
$$(x\in U_n^r)\to\Big|Tr(W_{k_1c_1}x_1\ldots W_{k_rc_r}x_r)\Big|^2$$

The point now is that the random Weyl matrices $W_{k_ic_i}$ can be ``absorbed'' into the Haar distributed unitaries $x_i$, and we obtain that $\mu^r$ is the law of the following variable:
$$(x\in U_n^r)\to\Big|Tr(x_1\ldots x_r)\Big|^2$$

Now since the product $x_1\ldots x_r\in U_n$ is Haar distributed when the individual variables $x_1\in U_n,\ldots,x_r\in U_n$ are each Haar distributed, this gives the result.

Finally, the last assertion is clear, because $x\to Tr(x)$ is the character of the fundamental representation $\pi:U_n\to M_n(\mathbb C)$, and so $x\to|Tr(x)|^2$ is the character of $ad(\pi)$.
\end{proof}

Summarizing, we have obtained Diaconis-Shahshahani variables \cite{dsh}. The asymptotics can be investigated by using the Weingarten formula, and are well-known, see \cite{csn}, \cite{wei}. Note also that by \cite{rai}, the moments of the variable $|Tr(x)|^2$ are:
$$c_p=\#\left\{\sigma \in S_p\ \Big|\ \sigma \text{ has no increasing subsequence of length greater than } n\right\}$$

From a quantum group viewpoint, Theorem 4.3 suggests that the underlying quantum group should be a twist of $PU_n$. There is actually more evidence pointing towards this, coming from \cite{bb1}, \cite{bbc}. We intend to investigate these facts in some future work.

\section{Universal models}

We discuss in the reminder of this paper a ``universal'' model for $C(S_N^+)$. Generally speaking, the universal $K\times K$ model is simply the map $\pi_{univ}:C(S_N^+)\to M_N(C(Z_{N,K}))$ given by $\pi_{univ}(w_{ij})=(u\to u_{ij})$, where $Z_{N,K}$ is the space of all magic unitaries $u\in M_N(M_K(\mathbb C))$. However, not much is known about this space $Z_{N,K}$.

Our idea here is that of restricting attention to the case where $N=K$, and where $u\in M_N(M_N(\mathbb C))$ is ``flat'', in the sense that each $u_{ij}\in M_N(\mathbb C)$ is a rank 1 projection. Our main objective will be that of constructing an integration on the model space.

Given a flat magic unitary, we can write it, in a non-unique way, as $u_{ij}=Proj(\xi_{ij})$. The array $\xi=(\xi_{ij})$ is then a ``magic basis'', in the sense that each of its rows and columns is an orthonormal basis of $\mathbb C^N$. We are therefore led to two spaces, as follows:

\begin{definition}
Associated to any $N\in\mathbb N$ are the following spaces:
\begin{enumerate}
\item $X_N$, the space of all $N\times N$ flat magic unitaries $u=(u_{ij})$.

\item $K_N$, the space of all $N\times N$ magic bases $\xi=(\xi_{ij})$.
\end{enumerate}
\end{definition}

Let us recall now that the rank 1 projections $p\in M_N(\mathbb C)$ can be identified with the corresponding 1-dimensional subspaces $E\subset\mathbb C^N$, which are by definition the elements of the complex projective space $P^{N-1}_\mathbb C$. In addition, if we consider the complex sphere, $S^{N-1}_\mathbb C=\{z\in\mathbb C^N|\sum_i|z_i|^2=1\}$, we have a quotient map $\pi:S^{N-1}_\mathbb C\to P^{N-1}_\mathbb C$ given by $z\to Proj(z)$. Observe that $\pi(z)=\pi(z')$ precisely when $z'=wz$, for some $w\in\mathbb T$. 

Consider as well the embedding $U_N\subset(S^{N-1}_\mathbb C)^N$ given by $x\to(x_1,\ldots,x_N)$, where $x_1,\ldots,x_N$ are the rows of $x$. Finally, let us call an abstract matrix stochastic/bistochastic when the entries on each row/each row and column sum up to 1. 

With these notations, the abstract model spaces $X_N,K_N$ that we are interested in, and some related spaces, are as follows:

\begin{proposition}
We have inclusions and surjections as follows,
$$\begin{matrix}
K_N&\subset&U_N^N&\subset&M_N(S^{N-1}_\mathbb C)\\
\\
\downarrow&&\downarrow&&\downarrow\\
\\
X_N&\subset&Y_N&\subset&M_N(P^{N-1}_\mathbb C)
\end{matrix}$$
where $X_N,Y_N$ consist of bistochastic/stochastic matrices, and $K_N$ is the lift of $X_N$.
\end{proposition}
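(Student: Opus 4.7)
The statement is essentially a packaging of earlier remarks, so my plan is to build the diagram from its right-hand column and identify the other pieces as preimages. First I would fix the componentwise map $\Pi:M_N(S^{N-1}_\mathbb C)\to M_N(P^{N-1}_\mathbb C)$, $(\xi_{ij})\mapsto(Proj(\xi_{ij}))$, which serves as the right-hand vertical arrow; the inclusion $U_N^N\subset M_N(S^{N-1}_\mathbb C)$ is just the given row-embedding $U_N\subset(S^{N-1}_\mathbb C)^N$ applied to each of the $N$ factors, so that $(x^{(1)},\ldots,x^{(N)})\in U_N^N$ becomes the array with $(i,j)$-entry equal to the $j$-th row of $x^{(i)}$.

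The single nontrivial ingredient is the elementary equivalence: for unit vectors $\xi_1,\ldots,\xi_N\in S^{N-1}_\mathbb C$, the family is an orthonormal basis of $\mathbb C^N$ if and only if $\sum_i Proj(\xi_i)=I$. Applied row by row, this says that an array $(\xi_{ij})\in M_N(S^{N-1}_\mathbb C)$ lies in $U_N^N$ exactly when each row of $\Pi(\xi)$ consists of rank-$1$ projections summing to $I$; setting $Y_N=\Pi(U_N^N)$ then describes $Y_N$ as the ``stochastic'' matrices of rank-$1$ projections and produces the middle vertical surjection. Applying the equivalence to rows and columns simultaneously identifies $K_N$ as exactly those arrays in $U_N^N$ whose columns are \emph{also} unitary, and shows that $\Pi(K_N)=X_N$, the bistochastic (i.e.\ flat magic) matrices. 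The inclusions $K_N\subset U_N^N$ and $X_N\subset Y_N$ are then immediate from these characterizations, completing the diagram.

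For the last assertion, that $K_N$ is the \emph{full} lift of $X_N$, I would note that any two lifts of a given $u=(u_{ij})\in X_N$ differ by phases $w_{ij}\in\mathbb T$ via $\xi_{ij}\mapsto w_{ij}\xi_{ij}$, and such rescalings preserve row and column orthogonality. Thus every preimage under $\Pi$ of an element of $X_N$ automatically lies in $K_N$, and so $K_N=\Pi^{-1}(X_N)$. There is no real obstacle here; the only point worth guarding against is conflating ``a lift'' with ``the lift'', which the phase-freedom remark settles.
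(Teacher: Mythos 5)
Your proposal is correct and follows essentially the same route as the paper, whose proof merely records that the componentwise quotient $S^{N-1}_\mathbb C\to P^{N-1}_\mathbb C$ induces the right-hand vertical map and that the lift of the stochastic matrices $Y_N$ is $U_N^N$. You simply supply the details the paper leaves implicit, namely the equivalence between orthonormal bases and rank-one resolutions of the identity, applied to rows and columns, and the phase-freedom argument showing $K_N$ is the full preimage of $X_N$.
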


\begin{proof}
This follows from the above discussion. Indeed, the quotient map $S^{N-1}_\mathbb C\to P^{N-1}_\mathbb C$ induces the quotient map $M_N(S^{N-1}_\mathbb C)\to M_N(P^{N-1}_\mathbb C)$ at right, and the lift of the space of stochastic matrices $Y_N\subset M_N(P^{N-1}_\mathbb C)$ is then the rescaled group $U_N^N$, as claimed.
\end{proof}

In order to get some insight into the structure of $X_N,K_N$, we use inspiration from the Sinkhorn algorithm \cite{sin}, \cite{skn}. This algorithm starts with a $N\times N$ matrix having positive entries and produces, via successive averagings over rows/columns, a bistochastic matrix. In our situation, we would like to have an ``averaging'' map $Y_N\to Y_N$, whose infinite iteration lands in the model space $X_N$. Equivalently, we would like to have an ``averaging'' map $U_N^N\to U_N^N$, whose infinite iteration lands in $K_N$.

In order to construct such averaging maps, we use the orthogonalization procedure coming from the polar decomposition. First, we have the following result:

\begin{proposition}
We have orthogonalization maps as follows,
$$\xymatrix@R=10mm@C=15mm{
(S^{N-1}_\mathbb C)^N\ar[r]^\alpha\ar[d]&(S^{N-1}_\mathbb C)^N\ar[d]\\
(P^{N-1}_\mathbb C)^N\ar[r]^\beta&(P^{N-1}_\mathbb C)^N}$$
where $\alpha(x)_i=Pol([(x_i)_j]_{ij})$, and $\beta(p)=(P^{-1/2}p_iP^{-1/2})_i$, with $P=\sum_ip_i$.
\end{proposition}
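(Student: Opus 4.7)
The plan is to check (i) that $\alpha$ lands in $(S^{N-1}_\mathbb C)^N$, (ii) that $\beta$ lands in $(P^{N-1}_\mathbb C)^N$, and (iii) that the square commutes; all three points will follow from a direct analysis of the polar decomposition.

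For (i), I would assemble the data $x=(x_1,\ldots,x_N)$ into the matrix $M\in M_N(\mathbb C)$ with $M_{ij}=(x_i)_j$, so that $x_i$ is the $i$-th row of $M$. On the dense open locus where $M$ is invertible, the polar decomposition yields $M=U|M|$ with $U=Pol(M)=M(M^*M)^{-1/2}$ unitary; since the rows of a unitary matrix are unit vectors, $\alpha(x)_i\in S^{N-1}_\mathbb C$. For (ii), set $P=\sum_i p_i$; this is positive and generically invertible (note $tr(P)=N$), and conjugation by $P^{-1/2}$ preserves both positivity and rank, so $P^{-1/2}p_iP^{-1/2}$ is rank $1$ positive, hence defines a point of $P^{N-1}_\mathbb C$.

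For (iii), I would write each $p_i$ as $x_i^*x_i$ in a row-vector convention (where $Proj(x_i)=x_i^*x_i$ for a unit row vector $x_i$), and record the key identity
$$\sum_ip_i=\sum_ix_i^*x_i=M^*M=P,$$
which matches the $P$ of $\beta$ with the $|M|^2$ of the polar decomposition of $M$. The $i$-th row of $U=MP^{-1/2}$ is then $x_iP^{-1/2}$, and the rank-$1$ projection onto the line it spans is proportional to
$$(x_iP^{-1/2})^*(x_iP^{-1/2})=P^{-1/2}x_i^*x_iP^{-1/2}=P^{-1/2}p_iP^{-1/2}=\beta(p)_i.$$
This proves $Proj\circ\alpha=\beta\circ Proj$ componentwise, which is the content of the square.

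The only real difficulty is notational: one must carefully align the row-vs-column conventions so that the matrix $P=\sum_ip_i$ appearing in the lower horizontal arrow coincides with the Gram-type factor $M^*M$ from the polar decomposition of $M$. Once this identification is fixed, the verification of commutativity is the single-line matrix computation above, and no input deeper than the existence of the polar decomposition of an invertible matrix is needed.
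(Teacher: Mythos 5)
Your argument is correct and follows essentially the same route as the paper: identify $\sum_i p_i$ with $M^*M$ for the matrix $M$ whose rows are the lifts $x_i$, take the polar part $U=MP^{-1/2}$, and check that the projection onto its $i$-th row is $P^{-1/2}p_iP^{-1/2}$ (equality, not just proportionality, since the rows of $U$ are unit vectors). The only organizational difference is that you take the explicit formula as the definition of $\beta$ and verify commutativity of the square, which automatically subsumes the well-definedness check (independence of the phases $\lambda_i$ in the lifts) that the paper carries out separately.
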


\begin{proof}
Our first claim is that we have a factorization as in the statement. Indeed, pick $p_1,\ldots,p_N\in P^{N-1}_\mathbb C$, and write $p_i=Proj(x_i)$, with $||x_i||=1$. We can then apply $\alpha$, as to obtain a vector $\alpha(x)=(x_i')_i$, and then set $\beta(p)=(p_i')$, where $p_i'=Proj(x_i')$.

Our first task is to prove that $\beta$ is well-defined. Consider indeed vectors $\tilde{x}_i$, satisfying $Proj(\tilde{x}_i)=Proj(x_i)$. We have then $\widetilde{x}_i=\lambda_ix_i$, for certain scalars $\lambda_i\in\mathbb T$, and so the matrix formed by these vectors is $\widetilde{M}=\Lambda M$, with $\Lambda=diag(\lambda_i)$. It follows that $Pol(\widetilde{M})=\Lambda Pol(M)$, and so $\tilde{x}_i'=\lambda_ix_i$, and finally $Proj(\tilde{x}_i')=Proj(x_i')$, as desired.

It remains to prove that $\beta$ is given by the formula in the statement. For this purpose, observe first that, given $x_1,\ldots,x_N\in S^{N-1}_\mathbb C$, with $p_i=Proj(x_i)$ we have:
$$\sum_ip_i=\sum_i[(\bar{x}_i)_k(x_i)_l]_{kl}=\sum_i(\bar{M}_{ik}M_{il})_{kl}=((M^*M)_{kl})_{kl}=M^*M$$

We can now compute the projections $p_i'=Proj(x_i')$. Indeed, the coefficients  of these projections are given by $(p_i')_{kl}=\bar{U}_{ik}U_{il}$ with $U=MP^{-1/2}$, and we obtain, as desired:
\begin{eqnarray*}
(p_i')_{kl}
&=&\sum_{ab}\bar{M}_{ia}P^{-1/2}_{ak}M_{ib}P^{-1/2}_{bl}
=\sum_{ab}P^{-1/2}_{ka}\bar{M}_{ia}M_{ib}P^{-1/2}_{bl}\\
&=&\sum_{ab}P^{-1/2}_{ka}(p_i)_{ab}P^{-1/2}_{bl}
=(P^{-1/2}p_iP^{-1/2})_{kl}
\end{eqnarray*}

An alternative proof uses the fact that the elements $p_i'=P^{-1/2}p_iP^{-1/2}$ are self-adjoint, and sum up to 1. The fact that these elements are indeed idempotents can be checked directly, via $p_iP^{-1}p_i=p_i$, because this equality holds on $\ker p_i$, and also on $x_i$.
\end{proof}

As an illustration, here is how the orthogonalization works at $N=2$:

\begin{proposition}
At $N=2$ the orthogonalization procedure for $(Proj(x),Proj(y))$ amounts in considering the vectors $(x\pm y)/\sqrt{2}$, and then rotating by $45^\circ$.
\end{proposition}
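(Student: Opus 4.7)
The plan is to apply the formula $\beta(p)=P^{-1/2}pP^{-1/2}$ from Proposition 5.3 in a basis adapted to the pair $u=(x+y)/\sqrt{2}$, $v=(x-y)/\sqrt{2}$. Since $Proj$, and hence $\beta$, is invariant under rescaling its inputs by unit scalars, I first replace $y$ by a suitable phase multiple so that the overlap $c=\langle x,y\rangle$ is real with $c\in[0,1]$. This phase adjustment is the only delicate point: without it, $u$ and $v$ would fail to be orthogonal in the complex setting.

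With this normalization, a direct check gives $\langle u,v\rangle=0$ and $|u|^2=1+c$, $|v|^2=1-c$. Letting $\hat u$, $\hat v$ be the corresponding unit vectors, they form an orthonormal basis of the span of $x,y$, and inverting the change of variables yields
$$x=\sqrt{\tfrac{1+c}{2}}\,\hat u+\sqrt{\tfrac{1-c}{2}}\,\hat v,\qquad y=\sqrt{\tfrac{1+c}{2}}\,\hat u-\sqrt{\tfrac{1-c}{2}}\,\hat v.$$

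The key observation is that, in the basis $\{\hat u,\hat v\}$, the matrices $p=Proj(x)$ and $q=Proj(y)$ have identical diagonal entries and opposite off-diagonal entries, so their sum $P=p+q$ is diagonal with eigenvalues $1+c$ and $1-c$. Consequently $P^{-1/2}$ is diagonal with entries $(1+c)^{-1/2}$ and $(1-c)^{-1/2}$, and applying $P^{-1/2}$ to $x$ and $y$ makes the square roots telescope to produce the orthonormal pair
$$x'=\tfrac{1}{\sqrt 2}(\hat u+\hat v),\qquad y'=\tfrac{1}{\sqrt 2}(\hat u-\hat v).$$

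To conclude, I observe that the map $(\hat u,\hat v)\mapsto(x',y')$ is precisely the Hadamard/$45^\circ$ rotation in the plane they span. This is exactly the content of the statement. The argument has no real obstacle beyond the initial phase adjustment; everything else is a direct $2\times 2$ matrix calculation.
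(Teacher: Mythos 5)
Your proof is correct and follows essentially the same route as the paper's: both reduce to a direct $2\times 2$ computation in which $P=p+q$ is diagonal in the basis spanned by $(x\pm y)/\sqrt{2}$, so that $P^{-1/2}$ simply rescales and produces the $45^\circ$-rotated pair. The only cosmetic difference is that the paper passes to real coordinates $x=(\cos t,\sin t)$, $y=(\cos t,-\sin t)$ by a rotation, whereas you stay basis-free and make explicit the phase normalization of $\langle x,y\rangle$ needed in the complex setting -- a point worth having on record, since the statement as written tacitly assumes it.
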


\begin{proof}
By performing a rotation, we can restrict attention to the case $x=(\cos t,\sin t)$ and $y=(\cos t,-\sin t)$, with $t\in(0,\pi/2)$. Here the computations are as follows:
\begin{eqnarray*}
M=\begin{pmatrix}\cos t&\sin t\\ \cos t&-\sin t\end{pmatrix}
&\implies&
P=M^*M=\begin{pmatrix}2\cos^2t&0\\0&2\sin^2t\end{pmatrix}\\
&\implies&P^{-1/2}=|M|^{-1}=\frac{1}{\sqrt{2}}\begin{pmatrix}\frac{1}{\cos t}&0\\0&\frac{1}{\sin t}\end{pmatrix}\\
&\implies&U=M|M|^{-1}=\frac{1}{\sqrt{2}}\begin{pmatrix}1&1\\1&-1\end{pmatrix}
\end{eqnarray*}

Thus the orthogonalization procedure replaces $(Proj(x),Proj(y))$ by the orthogonal projections on the vectors $(\frac{1}{\sqrt{2}}(1,1),\frac{1}{\sqrt{2}}(-1,1))$, and this gives the result.
\end{proof}

With these preliminaries in hand, let us discuss now the version that we need of the Sinkhorn algorithm. The orthogonalization procedure is as follows:

\begin{proposition}
The orthogonalization maps $\alpha,\beta$ induce maps as follows,
$$\xymatrix@R=10mm@C=15mm{
U_N^N\ar[r]^\Phi\ar[d]&U_N^N\ar[d]\\
Y_N\ar[r]^\Psi&Y_N}$$
which are the transposition maps on $K_N,X_N$, and which are projections at $N=2$.
\end{proposition}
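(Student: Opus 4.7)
The plan is to take $\Phi$ to be column-wise $\alpha$ followed by transposition of the $N\times N$ array (and $\Psi$ the analogous map with $\beta$ in place of $\alpha$). Concretely, viewing $\xi\in U_N^N$ as an $N\times N$ array of unit vectors with orthonormal rows, I orthogonalize each column by applying $\alpha$, then transpose. After the column step, the columns of the intermediate array are orthonormal by construction; transposing turns these into orthonormal rows, so the output lies in $U_N^N$. The square commutes because $\alpha$ lifts $\beta$ at the projection level, by Proposition 5.3. Several equivalent alternative descriptions are possible (for instance: transpose first, then apply $\alpha$ row-wise), all yielding the same $\Phi,\Psi$.

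For the transposition claim on $K_N$ and $X_N$, the key observation is that the orthogonalization step acts trivially on these subspaces. Indeed, if $\xi\in K_N$, then each column of $\xi$ is already an orthonormal basis of $\mathbb C^N$, so the matrix formed from that column is unitary, its polar part is itself, and $\alpha$ acts as the identity column by column. Only the final transpose remains, giving $\Phi|_{K_N}(\xi)=\xi^t$. The argument for $\Psi|_{X_N}$ is parallel: on $X_N$ every column sum $P_j$ equals $1$, so $\beta$ acts trivially on each column and only the transposition survives.

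For the projection claim at $N=2$, I plan to prove $\Psi^2=\Psi$ on $Y_2$ by first showing $\Psi(Y_2)\subset X_2$. For $u\in Y_2$ the row-complementarity $u_{i2}=1-u_{i1}$ forces $P_2=2-P_1$, so $P_1$ and $P_2$ commute and share eigenvectors with dual eigenvalues $\lambda$ and $2-\lambda$. At $N=2$ we have $tr(P_1)=2$, so the two eigenvalues of $P_1$ sum to $2$, and consequently $P_1^{-1/2}P_2^{-1/2}$ has equal eigenvalues and is a scalar multiple of $1$. Combined with the row-$1$ orthogonality $v_{11}\perp v_{12}$ of any unit-vector lifts, this yields $<P_1^{-1/2}v_{11},P_2^{-1/2}v_{12}>=0$, so the two rank-$1$ projections appearing in row $1$ of the column-$\beta$ output are onto orthogonal lines and must sum to $1$; the same works for row $2$. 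On $X_2$ the general magic form $\begin{pmatrix}p&1-p\\1-p&p\end{pmatrix}$ shows every element is symmetric, so the transposition from the previous paragraph is the identity on $X_2$. Combining gives $\Psi^2(u)=\Psi(u)$ for all $u\in Y_2$. The statement for $\Phi$ then follows from the commutative square, modulo the phase ambiguity inherent in lifting projections to unit vectors.

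The main obstacle is the scalar identity $P_1^{-1/2}P_2^{-1/2}\in\mathbb C\cdot 1$ at $N=2$; this is genuinely a two-dimensional accident (equivalent to the fact that rank-$1$ projections in $M_2(\mathbb C)$ summing to the identity are automatically complementary), so I would not expect the projection property to persist at $N\geq 3$.
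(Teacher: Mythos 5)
Your construction of $\Phi,\Psi$ (column-wise orthogonalization followed by transposition of the array) is exactly the paper's, which records it as $\Psi(u)=(P_i^{-1/2}u_{ji}P_i^{-1/2})_{ij}$, and your trivial-action argument on $K_N,X_N$ is the intended one. Where you genuinely add value is the $N=2$ projection claim, which the paper dismisses as ``clear'': your chain $P_2=2-P_1$, hence $P_1^{-1/2}P_2^{-1/2}\in\mathbb C\cdot 1$, hence preservation of the row orthogonality $v_{j1}\perp v_{j2}$, hence $\Psi(Y_2)\subset X_2$, combined with the symmetry of all elements of $X_2$, is the actual content needed (note that Proposition 5.4 alone only handles a single column and does not give the cross-column orthogonality). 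Your caveat about $\Phi$ is also well placed: the commutative square only yields $\Phi^2=\Phi$ up to phases, and indeed a direct computation with $x_1=1$ and $x_2$ a generic complex unitary shows $\Phi(x)_{12}$ and $\Phi(x)_{21}$ can differ by a nontrivial phase, so $\Phi$ is a projection only at the level of $Y_2$ (equivalently on $K_2$ modulo phases); the paper glosses over this. In short: correct, same route, with the missing $N=2$ argument supplied and an accurate flag on the one point where the statement needs to be read modulo phases.
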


\begin{proof}
It follows from definitions that $\Phi(x)$ is obtained by putting the components of $x=(x_i)$ in a row, then picking the $j$-th column vectors of each $x_i$, calling $M_j$ this matrix, then taking the polar part $x_j'=Pol(M_j)$, and finally setting $\Phi(x)=x'$. Thus:
$$\Phi(x)=Pol((x_{ij})_i)_j\quad,\quad\Psi(u)=(P_i^{-1/2}u_{ji}P_i^{-1/2})_{ij}$$

Thus, the first assertion is clear, and the second assertion is clear too.
\end{proof}

At $N=3$ now, the algorithm doesn't stop any longer after 1 step. We obtain, after an infinite iteration, one of the 2 possible magic matrices coming from Latin squares.

Our first claim is that the algorithm converges, as follows:

\begin{conjecture}
The maps $\Phi,\Psi$ increase the volume,
$$vol:U_N^N\to Y_N\to[0,1],\quad vol(u)=\prod_j|\det((u_{ij})_i)|$$
and respectively land, after an infinite number of steps, in $K_N/X_N$.
\end{conjecture}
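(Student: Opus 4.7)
The plan is to prove the two assertions of the conjecture separately --- (a) the volume monotonicity $vol(\Phi(x))\geq vol(x)$ (with the analogous statement for $\Psi$), and (b) the convergence of $\Phi^k(x)$ into $K_N$ --- the argument for $\Psi$ and $X_N$ being analogous via the intertwining by the quotient $U_N^N\to Y_N$ from Proposition 5.5. For (a), identify $x=(x_i)\in U_N^N$ with the array $A=(A_{ij})$, where $A_{ij}$ is the $j$-th column of $x_i$. Hadamard's inequality, applied to the column matrix $K_j=[A_{1j}|\cdots|A_{Nj}]$ (whose columns are unit vectors, since each $x_i$ is unitary), gives $vol(x)\leq 1$, with equality iff each $K_j$ is unitary, i.e.\ $x\in K_N$. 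Writing the polar decomposition $K_j=V_jP_j$ with $V_j=Pol(K_j)$ and $P_j=(K_j^*K_j)^{1/2}$, we have $vol(x)=\prod_j\det P_j$. After $\Phi$ the new column matrices are $K_j'=[(V_1)_{\cdot,j}|\cdots|(V_N)_{\cdot,j}]$, so $vol(\Phi(x))=\prod_j|D_{j\ldots j}|$, where $D_{m_1\ldots m_N}=\det[(V_1)_{\cdot,m_1}|\cdots|(V_N)_{\cdot,m_N}]$. Expanding the constraint $|\det x_i|=1$ multilinearly in this basis yields, for each $i$, an equation of the form $\bigl|\sum_{\mathbf{m}}\prod_\ell(P_\ell)_{m_\ell,i}\,D_{\mathbf{m}}\bigr|=1$, and the required inequality $\prod_j\det P_j\leq\prod_j|D_{j\ldots j}|$ must be extracted from these, combined with the variational characterization of $V_j$ as the unitary minimizing $\|K_j-U\|_F$ (equivalently, maximizing $\Re\,tr(U^*K_j)=tr(P_j)$).

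Granting (a) with strict inequality outside $K_N$, part (b) is routine: the sequence $vol(\Phi^k(x))\in[0,1]$ is non-decreasing and bounded, so it converges; compactness of $U_N^N$ yields a subsequential limit $\Phi^{k_\ell}(x)\to y$; continuity of $\Phi$ and $vol$ forces $vol(\Phi(y))=vol(y)$, and the equality case of (a) then forces $y\in K_N$. Hence every accumulation point of the orbit lies in $K_N$, and $d(\Phi^k(x),K_N)\to 0$.

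The main obstacle is the matrix inequality $\prod_j\det P_j\leq\prod_j|D_{j\ldots j}|$ in step (a). At $N=2$ it is immediate from Proposition 5.5 (since $\Phi$ lands in $K_N$ after a single step, giving $vol(\Phi(x))=1$), but for $N\geq 3$ the inequality is delicate, as it relates the ``diagonal'' entries of the tensor $D$ to its off-diagonal values through the $P_\ell$ weights. I would first verify it at $N=3$ on explicit one-parameter families, and then attempt either a trace/Hadamard-type argument leveraging the characterization $tr(P_j)=\max_U\Re\,tr(U^*K_j)$, or a log-concavity argument for $\log vol$ along the $\Phi$-orbit.
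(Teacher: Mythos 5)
The statement you are proving is Conjecture 5.6 of the paper: it is stated as a conjecture precisely because the authors have no proof, and none appears anywhere in the text. The closest the paper comes is Section 6, where a different candidate Lyapunov functional $F_p(x)=\frac{1}{N^p}\|T_p^x\xi_{\sqcap\ldots\sqcap}\|^2$ is introduced, its monotonicity under $\Psi^2$ is again only conjectured (Conjecture 6.5, ``supported by computer calculations''), reduced at $N=2$, $p=2$ to a projection inequality (Conjecture 6.6), and only the degenerate case $S=0$ of that inequality is actually proved (Proposition 6.7). So your attempt cannot be compared with a proof in the paper; it has to stand on its own, and it does not.

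The genuine gap is exactly where you locate it yourself: the monotonicity $vol(\Phi(x))\geq vol(x)$, i.e.\ $\prod_j\det P_j\leq\prod_j|D_{j\ldots j}|$ in your notation, together with the characterization of the equality case, is never derived. Everything else in your write-up is downstream of that inequality: the Hadamard bound $vol\leq 1$ with equality iff $x\in K_N$ is correct but easy, and part (b) is the standard Lyapunov-plus-compactness argument that the authors themselves already sketch after Conjecture 6.5 (``By a compacity argument, this would prove that our Sinkhorn type algorithm converges''); note also that (b) needs the \emph{strict} inequality off $K_N$, which is an additional unproved claim. The two tools you propose for the key step do not obviously suffice: the multilinear expansion of $|\det x_i|=1$ produces identities mixing the diagonal minors $D_{j\ldots j}$ with all off-diagonal ones carrying uncontrolled complex phases, with no visible positivity to exploit; and the variational characterization of the polar part controls $tr(P_j)=\max_U\Re\,tr(U^*K_j)$, i.e.\ the trace of $P_j$, not its determinant, so it is not adapted to a product-of-determinants functional. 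As it stands, this is a plausible strategy outline for an open problem, not a proof.
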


Observe that the quantities of type $|\det(p_1,\ldots,p_N)|$ are indeed well-defined, for any $p_1,\ldots,p_N\in P^{N-1}_\mathbb C$, because multiplying by scalars $\lambda_i\in\mathbb T$ doesn't change the volume. Thus, the volume map $vol:U_N^N\to[0,1]$ factorizes through $Y_N$, as stated above.

As a main application of the above conjecture, the infinite iteration $(\Phi^2)^\infty:U_N^N\to K_N$ would provide us with an integration on $K_N$, and hence on the quotient space $K_N\to X_N$ as well, by taking the push-forward measures, coming from the Haar measure on $U_N^N$. 

In relation now with the matrix model problematics, we have:

\begin{conjecture}
The universal $N\times N$ flat matrix representation
$$\pi_N:C(S_N^+)\to M_N(C(X_N)),\quad \pi_N(w_{ij})=(u\to u_{ij})$$
is faithful at $N=4$, and is inner faithful at any $N\geq5$.
\end{conjecture}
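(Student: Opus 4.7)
The plan is to treat $N=4$ directly via a factorization argument, and $N\geq 5$ via a density strategy on the model space $X_N$. For $N=4$, the Pauli construction of Proposition 2.1 furnishes a continuous map $\iota: U_2 \to X_4$, $x \mapsto (Proj(g_i x g_j^*))_{ij}$, whose image lies in $X_4$ because the entries are rank-$1$ projections. This $\iota$ induces a restriction $*$-homomorphism $\iota^*: M_4(C(X_4)) \to M_4(C(U_2))$, and by construction $\iota^* \circ \pi_4$ coincides with the Pauli representation, which is faithful. Therefore $\pi_4$ is faithful, handling the first claim.

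For $N \geq 5$, faithfulness is precluded by the non-amenability of the dual of $S_N^+$, so the target is inner faithfulness: the Hopf image of $\pi_N$ must be all of $S_N^+$. By standard Hopf image theory this image equals the smallest quantum subgroup of $S_N^+$ containing every individual Hopf image $\mathcal G_u \subseteq S_N^+$ arising from the pointwise evaluations $\pi_u: C(S_N^+) \to M_N(\mathbb C)$, as $u$ ranges over $X_N$. The approach is to assemble inside $X_N$ enough magic unitaries so that the $\mathcal G_u$'s jointly generate $S_N^+$: (i) the Latin-square magic unitaries of Theorem 1.5, accounting for the classical $S_N \subset S_N^+$; (ii) the cocyclic models of Theorem 2.8 when $N = n^2$, whose Hopf images contain twists of $PU_n$ by Theorem 4.3; and (iii) the Sinkhorn limits from iterating $\Phi$ and $\Psi$ of Proposition 5.5, which Conjecture 5.6 asserts exist on a full-measure subset of $U_N^N$. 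Family (iii) is decisive, since pushing the Haar measure on $U_N^N$ through the Sinkhorn iteration would endow $X_N$ with a natural probability measure $\nu$; by Proposition 3.1, inner faithfulness would then reduce to the equality $\lim_r (tr \circ \pi_N)^{*r} = \int_{S_N^+}$ of truncated integrals computed against $\nu$.

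The hard part is this last step. Evaluating $(tr \circ \pi_N)^{*r}$ requires both an explicit parameterization of $X_N$ and a Weingarten-type combinatorial formula adapted to $\nu$, and at present neither is available: $X_N$ is a poorly understood subvariety of $M_N(P^{N-1}_\mathbb C)$, and the very existence of a Sinkhorn-invariant measure is itself Conjecture 5.6. A purely structural alternative, which would classify the proper Hopf quotients of $C(S_N^+)$ and show each is killed by some $\pi_u$, runs into the well-known difficulty that such a classification is open for $N \geq 5$. I therefore expect the natural order of attack to be: first establish Conjecture 5.6 and the measure $\nu$; then identify the character asymptotics, conjectured to be free Poisson, via Weingarten integration on $X_N$; and finally deduce inner faithfulness in the spirit of the character-based treatments of \cite{bb3}, \cite{bco}.
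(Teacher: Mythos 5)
This statement is Conjecture 5.7: the paper gives no proof of it, only the remark that the $N=4$ case should be approached by showing that the composition $C(S_4^+)\to M_4(C(X_4))\to\mathbb C$ equals the Haar integration, and the $N\geq5$ case by showing that the truncated moments $c_p^r$ of Proposition 3.1 converge to the Catalan numbers. So there is no paper proof to compare against, and your text must be judged on its own. Your $N=4$ argument is correct and, as far as the literal statement ``faithful at $N=4$'' goes, it actually settles that half of the conjecture: the map $\iota:U_2\to X_4$, $x\mapsto(Proj(g_ixg_j^*))_{ij}$, is continuous with values in the compact space $X_4$, the induced unital $*$-morphism $\mathrm{id}\otimes\iota^*:M_4(C(X_4))\to M_4(C(U_2))$ sends $\pi_4(w_{ij})$ to $[x\to Proj(g_ixg_j^*)]$, hence $\iota^*\circ\pi_4$ is the Pauli representation, and injectivity of the composite (Proposition 2.1, via \cite{bco} and coamenability of $S_4^+$) forces injectivity of $\pi_4$. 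This is a genuinely simpler observation than what the authors propose; note however that their follow-up remark shows they are really after the stronger, stationarity-type property that $tr\circ\pi_4$, computed against the conjectural measure of Conjecture 5.6, equals the Haar state --- your factorization gives faithfulness but contributes nothing to that.

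For $N\geq5$ you have a strategy, not a proof, and you say so honestly; the gaps are where you locate them. Two points need correcting. First, Theorem 4.3 only computes the law of the main character for the cocyclic models; it does not establish that their Hopf images contain (or are) twists of $PU_n$ --- the paper explicitly says this identification ``suggests'' itself and is left for future work, so item (ii) of your generating family is not available as stated. Second, the reformulation of inner faithfulness as ``the pointwise Hopf images $\mathcal G_u$ generate $S_N^+$'' is correct by \cite{bb2}, \cite{chi}, but it is a restatement of the problem rather than a reduction of it: none of the explicitly known points of $X_N$ (Latin squares, cocyclic models) are known to generate $S_N^+$, and your decisive family (iii) presupposes Conjecture 5.6. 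Your measure-theoretic plan is essentially the moment-theoretic route the paper itself proposes (convergence of $c_p^r$ to Catalan numbers via Proposition 3.1), and both are blocked at the same place: the absence of a proof of Conjecture 5.6 and of a Weingarten-type calculus on $X_N$.
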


Regarding the $N=4$ conjecture, the problem is that of proving, as in \cite{bco}, that the composition $C(S_4^+)\to M_4(C(X_4))\to\mathbb C$ equals the Haar integration on $S_4^+$.

Regarding the $N\geq 5$ conjecture, the problem here is that of proving that the truncated moments $c_p^r$ in Proposition 3.1 converge with $r\to\infty$ to the Catalan numbers.

\section{Linear algebra}

Our purpose here is to advance towards a unification of the two conjectures formulated in section 5 above. The point indeed is that when trying to approach Conjecture 5.7 with the probabilistic tools coming from Proposition 3.1, the estimates that are needed seem to be related to those required for approaching Conjecture 5.6.

We first have the following definition, inspired from Proposition 3.1:

\begin{definition}
Associated to $x\in M_N(S^{N-1}_\mathbb C)$ is the $N^p\times N^p$ matrix
$$(T_p^x)_{i_1\ldots i_p,j_1\ldots j_p}=\frac{1}{N}<x_{i_1j_1},x_{i_2j_2}><x_{i_2j_2},x_{i_3j_3}>\ldots\ldots<x_{i_pj_p},x_{i_1j_1}>$$
where the scalar products are the usual ones on $S^{N-1}_\mathbb C\subset\mathbb C^N$.
\end{definition}

The first few values of these matrices, at $p=1,2,3$, are as follows:
\begin{eqnarray*}
(T_1^x)_{ia}&=&\frac{1}{N}<x_{ia},x_{ia}>=\frac{1}{N}\\
(T_2^x)_{ij,ab}&=&\frac{1}{N}<x_{ia},x_{jb}><x_{jb},x_{ia}>=\frac{1}{N}|<x_{ia},x_{jb}>|^2\\
(T_3^x)_{ijk,abc}&=&\frac{1}{N}<x_{ia},x_{jb}><x_{jb},x_{kc}><x_{kc},x_{ia}>
\end{eqnarray*}

The interest in these matrices, in connection with Conjecture 5.7, comes from:

\begin{proposition}
For the universal model, the matrices $T_p$ in Proposition 3.1 are
$$T_p=\int_{K_N}T_p^xdx\,$$
where $dx$ is the measure on the model space $K_N$ coming from Conjecture 5.6.
\end{proposition}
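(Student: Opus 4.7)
The plan is to combine the description of $T_p$ from Proposition 3.1(2) with the integration on $X_N$ coming from Conjecture 5.6, and then reduce everything to a direct calculation of the trace of a product of rank $1$ projections.

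First, I would unpack the definition of the universal model. For the representation $\pi_N: C(S_N^+)\to M_N(C(X_N))$, each $U_{ij}:=\pi_N(w_{ij})$ is the evaluation function $u\mapsto u_{ij}\in M_N(\mathbb C)$, and the trace on $M_N(C(X_N))$ built from the measure $dx$ on $K_N$ pushed down to $X_N$ is $tr(M)=\frac{1}{N}\sum_k\int_{X_N}M_{kk}(u)\,du$. By Proposition 3.1(2), this gives
$$(T_p)_{i_1\ldots i_p,j_1\ldots j_p}=tr(U_{i_1j_1}\ldots U_{i_pj_p})=\int_{X_N}tr_{M_N(\mathbb C)}\bigl(u_{i_1j_1}\ldots u_{i_pj_p}\bigr)\,du,$$
where on the right $tr_{M_N(\mathbb C)}$ is the normalized trace. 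Since the measure on $X_N$ is by construction the push-forward of $dx$ along the quotient $K_N\to X_N$ sending $\xi\mapsto(Proj(\xi_{ij}))_{ij}$, I can rewrite this as an integral over $K_N$:
$$(T_p)_{i_1\ldots i_p,j_1\ldots j_p}=\int_{K_N}tr_{M_N(\mathbb C)}\bigl(Proj(x_{i_1j_1})\ldots Proj(x_{i_pj_p})\bigr)\,dx.$$

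The only thing left is an elementary linear algebra identity: for unit vectors $\xi_1,\ldots,\xi_p\in\mathbb C^N$, telescoping $Proj(\xi_k)Proj(\xi_{k+1})=\xi_k(\xi_k^*\xi_{k+1})\xi_{k+1}^*$ yields
$$Proj(\xi_1)\ldots Proj(\xi_p)=\langle\xi_1,\xi_2\rangle\langle\xi_2,\xi_3\rangle\ldots\langle\xi_{p-1},\xi_p\rangle\,\xi_1\xi_p^*,$$
and taking the normalized trace pulls out the remaining factor $\frac{1}{N}\langle\xi_p,\xi_1\rangle$. Applied with $\xi_k=x_{i_kj_k}$, this gives exactly
$$tr_{M_N(\mathbb C)}\bigl(Proj(x_{i_1j_1})\ldots Proj(x_{i_pj_p})\bigr)=\frac{1}{N}\langle x_{i_1j_1},x_{i_2j_2}\rangle\ldots\langle x_{i_pj_p},x_{i_1j_1}\rangle=(T_p^x)_{i_1\ldots i_p,j_1\ldots j_p},$$
matching Definition 6.1 entry-by-entry. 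Integrating over $K_N$ yields the claim.

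There is no real obstacle here once Conjecture 5.6 is taken as hypothesis, since the measure $dx$ on $K_N$ and its push-forward to $X_N$ are then in place by fiat; the whole content of the proposition is the trace-of-projections identity above, which is routine. The only point worth emphasizing in writing the proof is the compatibility of conventions: the inner product on $\mathbb C^N$ used in Definition 6.1 must be read as $\langle\xi,\eta\rangle=\xi^*\eta$ (so that the chain of scalars produced by the telescoping matches the order $\langle x_{i_kj_k},x_{i_{k+1}j_{k+1}}\rangle$), and the normalized trace on $M_N(\mathbb C)$ is the one that supplies the prefactor $1/N$.
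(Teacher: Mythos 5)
Your proof is correct and follows the same route as the paper: unpack $T_p$ via Proposition 3.1(2), pass the integral from $X_N$ to $K_N$ by the push-forward, and use the trace-of-rank-one-projections identity to recover $(T_p^x)_{i_1\ldots i_p,j_1\ldots j_p}$. The paper's own proof is exactly this computation, stated more tersely; your telescoping argument just makes the routine trace identity explicit.
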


\begin{proof}
This is a trivial statement, because by definition of $T_p$, we have:
\begin{eqnarray*}
(T_p)_{i_1\ldots i_p,j_1\ldots j_p}
&=&tr(u_{i_1j_1}\ldots u_{i_pj_p})=\int_{K_N}tr(u_{i_1j_1}^x\ldots u_{i_pj_p}^x)dx\\
&=&\int_{K_N}tr(Proj(x_{i_1j_1})\ldots Proj(x_{i_px_p}))dx\\
&=&\frac{1}{N}\int_{K_N}<x_{i_1j_1},x_{i_2j_2}>\ldots\ldots<x_{i_pj_p},x_{i_1j_1}>dx\\
&=&\int_{K_N}(T_p^x)_{i_1\ldots i_p,j_1\ldots j_p}dx
\end{eqnarray*}

Thus the formula in the statement holds indeed.
\end{proof}

Our claim is that the matrices $T_p^x$ are related to Conjecture 5.6 as well. To any noncrossing partition $\pi\in NC(1,\ldots,p)$ let us associate the following vector of $(\mathbb C^N)^{\otimes p}$: 
$$\xi_\pi=\sum_{\ker i\leq\pi}e_{i_1}\otimes\ldots\otimes e_{i_p}$$

These vectors appear in the representation theory of $S_N^+$. See \cite{bco}.

At $p=1$, we obtain the 1-eigenvector of $T_1^x=(1/N)_{ij}$:
$$\xi_|=\sum_ie_i$$

At $p=2$ now, the two vectors constructed above are as follows:
$$\xi_{||}=\sum_{ij}e_i\otimes e_j\quad,\quad \xi_\sqcap=\sum_ie_i\otimes e_i$$

In general, we have the following result:

\begin{proposition}
For any $x\in M_N(S^{N-1}_\mathbb C)$, the following hold:
\begin{enumerate}
\item If $\{x_{ij}\}_i$ are pairwise orthogonal then $(T_p^x)^*\xi_{||\ldots|}=\xi_{||\ldots|}$ and $T_p^x\xi_{\sqcap\hskip-0.5mm\sqcap\ldots\sqcap}=\xi_{\sqcap\hskip-0.5mm\sqcap\ldots\sqcap}$.

\item If $\{x_{ij}\}_j$ are pairwise orthogonal then $T_p^x\xi_{||\ldots|}=\xi_{||\ldots|}$ and $(T_p^x)^*\xi_{\sqcap\hskip-0.5mm\sqcap\ldots\sqcap}=\xi_{\sqcap\hskip-0.5mm\sqcap\ldots\sqcap}$.

\item If $\{x_{ij}\}_i$ or $\{x_{ij}\}_j$ are pairwise orthogonal then $<T_p^x\xi_{||\ldots|},\xi_{||\ldots|}>=N^p$.

\item We have $<T_p^x\xi_{\sqcap\hskip-0.5mm\sqcap\ldots\sqcap},\xi_{\sqcap\hskip-0.5mm\sqcap\ldots\sqcap}>=N$, without assumptions on $x$.
\end{enumerate}
\end{proposition}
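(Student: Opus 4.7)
My plan is to prove the four parts by direct computation from the definition of $T_p^x$, exploiting the cyclic product structure of its entries. The four parts split naturally: (4) is essentially a counting identity, (1) and (2) rest on the same telescoping trick applied with roles of row/column swapped, and (3) is a corollary of (1) or (2).

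First I would dispose of (4), which requires no assumption on $x$. Restricting the defining formula to $i_1=\cdots=i_p=k$ and $j_1=\cdots=j_p=l$, every inner product in the cyclic product equals $\langle x_{kl},x_{kl}\rangle=1$, so $(T_p^x)_{k\ldots k,l\ldots l}=1/N$. Summing over $k,l$ gives $N^2\cdot(1/N)=N$.

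For part (1), the key computational lemma is the \emph{telescoping identity}: under the hypothesis that $\{x_{ij}\}_i$ is orthonormal for every fixed $j$, one has $\sum_{j_a}|x_{j_a i_a}\rangle\langle x_{j_a i_a}|=I$, so for any two vectors $y,z$,
\[
\sum_{j_a}\langle y,x_{j_a i_a}\rangle\langle x_{j_a i_a},z\rangle=\langle y,z\rangle.
\]
To obtain $(T_p^x)^*\xi_{|\ldots|}=\xi_{|\ldots|}$, I write out
\[
\bigl((T_p^x)^*\xi_{|\ldots|}\bigr)_{i_1\ldots i_p}=\tfrac{1}{N}\sum_{j_1\ldots j_p}\langle x_{j_2 i_2},x_{j_1 i_1}\rangle\langle x_{j_3 i_3},x_{j_2 i_2}\rangle\cdots\langle x_{j_1 i_1},x_{j_p i_p}\rangle,
\]
and apply the telescoping identity first to the sum over $j_1$ (collapsing the two factors carrying $x_{j_1 i_1}$ into $\langle x_{j_2 i_2},x_{j_p i_p}\rangle$), then iteratively over $j_2,j_3,\ldots,j_{p-1}$, leaving $\langle x_{j_p i_p},x_{j_p i_p}\rangle=1$; the remaining sum over $j_p$ yields $N$, cancelling the $1/N$. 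For the dual statement $T_p^x\xi_{\sqcap\ldots\sqcap}=\xi_{\sqcap\ldots\sqcap}$, I set $j_1=\cdots=j_p=k$ in the definition, use $\langle x_{i_a k},x_{i_b k}\rangle=\delta_{i_a i_b}$ (this time by orthonormality of the family at fixed $k$), so the cyclic product collapses to $\delta_{i_1=i_2=\cdots=i_p}$, and summing over $k$ recovers $\xi_{\sqcap\ldots\sqcap}$.

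Part (2) is identical modulo swapping the roles of the row and column indices of $x$, so I would simply say ``by the same argument, with $i$ and $j$ interchanged''. Part (3) follows immediately: under either orthogonality hypothesis, (1) or (2) gives $\xi_{|\ldots|}$ as an eigenvector of $T_p^x$ or $(T_p^x)^*$ with eigenvalue $1$, whence $\langle T_p^x\xi_{|\ldots|},\xi_{|\ldots|}\rangle=\langle\xi_{|\ldots|},\xi_{|\ldots|}\rangle=N^p$. The only delicate step is the bookkeeping in the iterated telescoping of (1): one must verify that the two factors of the cyclic product depending on $j_a$ are precisely the pair that telescopes, which is essentially automatic because $j_a$ appears in the $a$-th vector of the cycle, which is adjacent to the $(a-1)$-st and $(a+1)$-st factors. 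No genuine obstacle is expected.
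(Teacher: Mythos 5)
Your proposal is correct and follows essentially the same route as the paper: the delta-collapse for $T_p^x\xi_{\sqcap\ldots\sqcap}$, the iterated resolution-of-identity telescoping for the $\xi_{||\ldots|}$ eigenvector claims, deduction of (3) from (1)--(2) using $\|\xi_{||\ldots|}\|^2=N^p$, and the direct count for (4). The only cosmetic difference is organizational — you telescope directly on $(T_p^x)^*$ where the paper computes $T_p^x\xi_{||\ldots|}$ under hypothesis (2) and handles adjoints via the identity $(T_p^x)^*=T_p^{x^*}$ — but the underlying computations coincide.
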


\begin{proof}
It is elementary to see that we have $(T_p^x)^*=T_p^{x^*}$, and so it is enough to establish the assertions in (1,2) regarding the eigenvalues of $T_p^x$. The proof goes as follows:

(1) Assuming that $\{x_{ij}\}_i$ are pairwise orthogonal, we have indeed:
\begin{eqnarray*}
(T_p^x\xi_{\sqcap\hskip-0.5mm\sqcap\ldots\sqcap})_{i_1\ldots i_p}
&=&\sum_j(T_p^x)_{i_1\ldots i_p,j\ldots j}
=\frac{1}{N}\sum_j<x_{i_1j},x_{i_2j}>\ldots\ldots<x_{i_pj},x_{i_1j}>\\
&=&\frac{1}{N}\sum_j\delta_{i_1i_2}\ldots\delta_{i_pi_1}=\delta_{i_1,\ldots,i_p}
\end{eqnarray*}

(2) Assuming now that $\{x_{ij}\}_j$ are pairwise orthogonal, we have indeed:
$$(T_p^x\xi_{||\ldots|})_{i_1\ldots i_p}
=\sum_{j_1\ldots j_p}(T_p^x)_{i_1\ldots i_p,j_1\ldots j_p}
=\frac{1}{N}\sum_{j_1\ldots j_p}<x_{i_1j_1},x_{i_2j_2}>\ldots<x_{i_pj_p},x_{i_1j_1}>
=1$$

Here we have used, $p$ times via a recurrence, the fact that given an orthonormal basis $\{e_k\}$ we have $\sum_k<x,e_k><e_k,y>=<x,y>$, for any two vectors $x,y$.

(3) The scalar product in the statement is given by:
$$<T_p^x\xi_{||\ldots|},\xi_{||\ldots|}>
=\sum_{i_1\ldots i_p,j_1\ldots j_p}(T_p^x)_{i_1\ldots i_p,j_1\ldots j_p}\\
=\sum_{i_1\ldots i_p}(T_p^x\xi_{||\ldots|})_{i_1\ldots i_p}$$

When $\{x_{ij}\}_j$ are pairwise orthogonal, by using (2) we obtain $N^p$, as claimed. Since $(T_p^x)^*=T_p^{x^*}$, the result follows to hold when $\{x_{ij}\}_i$ are pairwise orthogonal too.

(4) We have the following computation, valid for any $x$:
\begin{eqnarray*}
<T_p^x\xi_{\sqcap\hskip-0.5mm\sqcap\ldots\sqcap},\xi_{\sqcap\hskip-0.5mm\sqcap\ldots\sqcap}>
&=&\sum_i(T_p^x\xi_{\sqcap\hskip-0.5mm\sqcap\ldots\sqcap})_{i\ldots i}
=\sum_{ij}(T_p^x)_{i\ldots i,j\ldots j}\\
&=&\frac{1}{N}\sum_{ij}<x_{ij},x_{ij}>^p
=N
\end{eqnarray*}

But this proves the last assertion, and we are done.
\end{proof}

The above computations suggest the following definition:

\begin{definition}
Associated to any $x\in M_N(S^{N-1}_\mathbb C)$ is the function
$$F_p(x)=\frac{1}{N^p}||T_p^x\xi_{\sqcap\hskip-0.5mm\sqcap\ldots\sqcap}||^2$$
depending on a fixed integer $p\geq2$.
\end{definition}

Observe that, according to the formula of $T_p^x$, we have:
$$F_p(x)=\frac{1}{N^{p+2}}\sum_{i_1\ldots i_p}\left|\sum_j<x_{i_1j},x_{i_2j}>\ldots\ldots<x_{i_pj},x_{i_1j}>\right|^2$$

We have the following statement, supported by computer calculations:

\begin{conjecture}
For any $x\in U_N^N$, and any $p\geq2$, we have 
$$F_p(x)\geq F_p(\Psi^2(x))$$
with equality iff $x\in K_N$, in which case $F_p(x)=1$. 
\end{conjecture}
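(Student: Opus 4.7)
The strategy is to exhibit $F_p$ as a nonnegative excess functional that is driven to its claimed minimum by the polar-decomposition steps inside $\Psi$. First I would rewrite the functional in trace-of-product form. Using $\langle x_{i_1 j},x_{i_2 j}\rangle\langle x_{i_2 j},x_{i_3 j}\rangle\cdots\langle x_{i_p j},x_{i_1 j}\rangle = N\operatorname{tr}(u_{i_1 j}u_{i_2 j}\cdots u_{i_p j})$, a direct expansion gives
$$F_p(x)=\frac{1}{N^p}\sum_{i_1,\ldots,i_p}\Big|\sum_{j=1}^N\operatorname{tr}(u_{i_1 j}u_{i_2 j}\cdots u_{i_p j})\Big|^2.$$
The diagonal terms $i_1=\cdots=i_p$ contribute a fixed baseline thanks to the row-orthogonality of $x\in U_N^N$, which yields $\sum_j\operatorname{tr}(u_{ij})=1$; the off-diagonal terms are manifestly nonnegative, and for $p=2$ vanish iff $\{u_{ij}\}_i$ is an orthogonal family for each $j$, i.e.\ iff $x\in K_N$. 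This already proves one half of the equality statement, and identifies $F_p$ on $K_N$ with the minimum value.

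Next I would analyze the action of $\Psi$. Recall $\Psi(u)_{ij}=P_j^{-1/2}u_{ji}P_j^{-1/2}$ with $P_j=\sum_i u_{ji}$: a column-polar decomposition composed with transposition. The polar step replaces $\{u_{ij}\}_i$ by the pairwise-orthogonal family $\{P_j^{-1/2}u_{ij}P_j^{-1/2}\}_i$ spanning the same column subspace. I would split $\Psi^2$ into two polar steps interleaved with transpositions, and establish two lemmas: \textbf{(a)} a single column polar step strictly decreases $F_p$ unless $\{u_{ij}\}_i$ is already orthogonal for every $j$, proved by expanding $\operatorname{tr}\bigl(\prod_k P_j^{-1/2}u_{i_k j}P_j^{-1/2}\bigr)$ and comparing it to $\operatorname{tr}\bigl(\prod_k u_{i_k j}\bigr)$ via a Cauchy--Schwarz / operator-convexity inequality governed by $P_j$; \textbf{(b)} the second polar step, acting on the row-indexed family after transposition, does not undo the gain. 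Composition of (a) and (b) then yields the claimed inequality $F_p(x)\geq F_p(\Psi^2(x))$, and the equality analysis propagates: equality forces each polar step to be the identity, hence $P_j=I$ before and after transposition, which is precisely the magic-basis condition $x\in K_N$.

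The main obstacle is lemma (b). The two polar steps inside $\Psi^2$ act along incompatible directions (columns versus rows modulo transpose) and do not commute; a priori the second step could reintroduce off-diagonal excess into the column-focused functional $F_p$. I would attempt two routes. The direct route is to verify the case $p=2$, where $F_2$ reduces to a single Hilbert--Schmidt-type sum $\frac{1}{N^p}\sum_{ij}|\langle x_{ia},x_{jb}\rangle|^4$-pattern, by explicit computation; and then to bootstrap to general $p$ by noting that the off-diagonal terms at higher $p$ factor through products of those at $p=2$. The conceptual route is to view both polar steps as metric projections onto an orthogonal-frame submanifold inside the ambient Hilbert--Schmidt geometry on $M_N(M_N(\mathbb C))^N$, with $F_p$ a quadratic approximation to squared distance from this submanifold; monotonicity of distance under alternating projection then supplies (b) with the correct equality case. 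I expect that combining either of these routes with the clean algebraic identity for $F_p$ derived above will close the argument uniformly in $p$.
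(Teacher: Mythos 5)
You should first be aware that this statement is an open conjecture in the paper: the authors offer no proof, only the remark that it is ``supported by computer calculations'', and the only rigorous result they obtain in this direction is Proposition 6.7, which handles the special case $S=0$ of the $N=2$, $p=2$ reformulation (Conjecture 6.6). So the question is whether your proposal actually closes the problem, and it does not. Your rewriting of $F_p$ as a fixed diagonal baseline plus a sum of nonnegative off-diagonal defect terms is correct (note in passing that the baseline comes out to $N^{1-p}$ rather than the value $1$ claimed in the conjecture, a normalization discrepancy in the statement itself that you should flag rather than absorb), and the ``if'' direction of the equality case is indeed easy, since $\Psi$ acts as transposition on $K_N$.

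The substance of your argument, however, is the pair of lemmas (a) and (b), and neither is proved. Lemma (a) is essentially contentless in your decomposition: a single column-polar step produces, for each $j$, an exactly orthogonal family $\{P_j^{-1/2}u_{ij}P_j^{-1/2}\}_i$, so it sends the column-defect functional $F_p$ directly to its absolute minimum --- no Cauchy--Schwarz or operator-convexity input is needed, and nothing quantitative is gained. All of the difficulty is therefore concentrated in lemma (b): after transposing, the second polar step restores row-orthogonality but re-introduces a column defect, and one must show that this new defect does not exceed the original one. You acknowledge you cannot prove (b), and the two routes you sketch do not supply it. The ``direct route'' for $p=2$ is not a finite verification: it is precisely the inequality that the paper reformulates as Conjecture 6.6 and can only establish when $S=0$; moreover your suggested bootstrap from $p=2$ to general $p$ fails because the off-diagonal term at level $p$ is the squared modulus of a \emph{sum over $j$} of $p$-fold products of scalar products, which can vanish or be small by cancellation without the $p=2$ defects being controlled, so it does not ``factor through'' the $p=2$ terms. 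The ``conceptual route'' is only a heuristic: the polar orthogonalization $p_i\mapsto P^{-1/2}p_iP^{-1/2}$ is not the metric projection onto $K_N$ (or onto any orthogonal-frame submanifold) in the Hilbert--Schmidt geometry, and $F_p$ is not a squared distance to such a set, so the monotonicity of alternating projections cannot be invoked. Since the claimed strict decrease in (b) is also what drives your equality analysis, the proposal as it stands restates the conjecture in a convenient form and correctly locates the difficulty, but does not prove it.
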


By a compacity argument, this would prove that our Sinkhorn type algorithm converges. Thus, we have here a first step towards unifying Conjecture 5.6 and Conjecture 5.7.

Let us restrict now attention to the case $p=2$. Here we have:
$$F_2(x)=\frac{1}{N^4}\sum_{ij}\left(\sum_k|<x_{ik},x_{jk}>|^2\right)^2$$

At $N=2$, by writing the inequality in Conjecture 6.5 in terms of the orthogonal projections $P,Q,R,S$ on the vectors $x_{ij}$, we are led to the following statement: 

\begin{conjecture}
Let $P,Q,R,S\in M_K(\mathbb C)$ be orthogonal projections satisfying:
\begin{enumerate}
\item $P\perp Q$.

\item $R\perp S$.

\item $Im(P)\cap Im(R)=\{0\}$.

\item $Im(Q)\cap Im(S)=\{0\}$.

\item $rank(P)+rank(Q)=rank(R)+rank(S)$.

\item $rank(P)+rank(R)=rank(Q)+rank(S)$.

\end{enumerate}
We have then the following inequality,
$$Tr(PR)+Tr(QS)\geq Tr(P'Q')+Tr(R'S')$$
where $P',Q',R',S'$ are the following orthogonal projections
\begin{eqnarray*}
P'&=&(P+R)^{-1/2}P(P+R)^{-1/2}\\
Q'&=&(Q+S)^{-1/2}Q(Q+S)^{-1/2}\\
R'&=&(P+R)^{-1/2}R(P+R)^{-1/2}\\
S'&=&(Q+S)^{-1/2}S(Q+S)^{-1/2}
\end{eqnarray*}
with all the inverses taken in the sense of Moore-Penrose.
\end{conjecture}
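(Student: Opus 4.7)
My plan is to reduce the inequality to a spectral statement via the CS decomposition, and handle the full-support and general cases separately.

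First I would verify that $P', Q', R', S'$ are orthogonal projections. Let $A := P + R$; by condition (3), $A$ has image exactly $V := Im(P) + Im(R)$, with $V = Im(P) \oplus Im(R)$ as a non-orthogonal direct sum, so the Moore--Penrose $A^{-1/2}$ is an invertible positive operator on $V$. The identity $PA^{-1}P = P$---checked by writing $y = p + r \in V$ with $Ay \in Im(P)$, forcing $r = -Rp$ and then $PA^{-1}x = Px$ for $x \in Im(P)$---gives $(P')^2 = P'$, and similarly for $R', Q', S'$, so that $P' + R' = \Pi_V$ and $Q' + S' = \Pi_W$ (with $W := Im(Q) + Im(S)$), and $P'R' = Q'S' = 0$. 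Conditions (5)--(6) then force $rank(P) = rank(S)$, $rank(Q) = rank(R)$, and $\dim V = \dim W$.

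Consider next the full-support subcase $V = W = \mathbb{C}^K$ (equivalently $Q = I - P$, $S = I - R$). Here $A$ and $B := 2I - A$ commute, so $C := A^{-1/2}B^{-1/2}$ is a function of $A$; a short cyclic-trace calculation gives $Tr(P'Q') = \|PC(I-P)\|_F^2$ and $Tr(R'S') = \|RC(I-R)\|_F^2$. The CS decomposition of $(P,R)$ splits $\mathbb{C}^K$ into $2 \times 2$ blocks on which $(P+R)(2I - P - R) = \sin^2\theta_k \cdot I$ (plus $1 \times 1$ blocks from $Im(P) \cap Im(R)^\perp$ and $Im(P)^\perp \cap Im(R)$, where $C = 1$); hence $C$ is scalar on each block and commutes with both $P$ and $R$, and both Frobenius norms vanish. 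Since $Tr(PR) + Tr(QS) = 2\sum_k \cos^2\theta_k \geq 0$, the inequality is trivial in this case.

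The general case is harder: $V, W$ are proper subspaces and the right side becomes a trace over $V \cap W$ governed by a unitary gluing between the CS bases of $(P, R)$ on $V$ and of $(Q, S)$ on $W$, subject to the constraints $PQ = 0$ and $RS = 0$. The target is a Cauchy--Schwarz estimate: on each shared axis of $V \cap W$, decompose it in both CS bases and bound its contribution to the right side by the corresponding $\cos^2\theta_i + \cos^2\psi_j$ on the left. The main obstacle is precisely this parameterization---the gluing becomes intricate when $2 \times 2$ CS pieces of $(P,R)$ interact with $1$-dimensional CS pieces of $(Q,S)$, as an explicit calculation at $K = 4$ with all ranks equal to $1$ shows (there the right side equals a non-trivial function of the principal angles, so the inequality has real content). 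A possible workaround is a continuity argument: start from a configuration with $V = W$, where the preceding paragraph applies, and deform to the general case along a family of admissible configurations while tracking $Tr(PR) + Tr(QS) - Tr(P'Q') - Tr(R'S')$, showing it remains non-negative; but exhibiting such a monotone deformation respecting all six numerical conditions is itself non-trivial and will require structural input.
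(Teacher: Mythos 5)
The statement you are attacking is Conjecture 6.6 of the paper, and the paper itself does not prove it: the only result offered is Proposition 6.7, which treats the special case $S=0$ by putting the pair $(P,R)$ into Halmos's two-subspace normal form and reducing the inequality to $Tr(I-H)\geq Tr(I-\sqrt{H})$ for the operator angle $H$. Your preliminary work is sound: the verification that $P',Q',R',S'$ are orthogonal projections via $PA^{-1}P=P$ (with $A=P+R$ and the Moore--Penrose inverse supported on $V=Im(P)\oplus Im(R)$) is correct, and is essentially the same computation the paper performs in Proposition 5.3 for the map $\beta$; the deduction $rank(P)=rank(S)$ and $rank(Q)=rank(R)$ from conditions (5)--(6) is also right.

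However, the one case you actually close, $V=W=\mathbb{C}^K$, is exactly the degenerate one: as you yourself compute, $C=(AB)^{-1/2}$ is scalar on each CS block, so $Tr(P'Q')=Tr(R'S')=0$ and the inequality collapses to $Tr(PR)+Tr(QS)\geq 0$. This carries none of the content of the conjecture, which lives precisely where $V$ and $W$ are proper subspaces and interact. The paper's $S=0$ case is disjoint from yours and is the first instance where the right-hand side is nonzero and a genuine estimate is needed (there, after using (3) to kill the $Im(P)\cap Im(R)$ summand, one shows $Tr(PR)=Tr(I-H)\geq Tr(I-\sqrt{H})=2Tr(P'(I-P))\geq 2Tr(P'Q)$); nothing in your plan reproduces an estimate of this type. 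The continuity/deformation argument you propose for the general case is only named, not constructed: you would need a path of admissible sextuples along which the defect $Tr(PR)+Tr(QS)-Tr(P'Q')-Tr(R'S')$ stays non-negative, and you identify no mechanism forcing that. So the proposal establishes correct but peripheral facts and leaves the substance of the conjecture untouched; at a minimum you should incorporate the Halmos normal form analysis for two subspaces, since that is where the analytic difficulty first appears. (A caveat when you do: if all six hypotheses are imposed literally, $S=0$ forces $rank(P)=rank(S)=0$ by (5)--(6), so the paper's Proposition 6.7 must be read with the rank conditions relaxed.)
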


We only know how to prove a special case of the statement above:

\begin{proposition}
Conjecture 6.6 holds for $S=0$.
\end{proposition}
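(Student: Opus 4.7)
The plan is to exploit the algebraic balance imposed by the rank hypotheses $(5)$ and $(6)$, which turn out to be surprisingly restrictive once $S=0$. First I would substitute $S=0$ into $(5)$ and $(6)$, obtaining the pair of equations
$$rank(P)+rank(Q)=rank(R)\quad,\quad rank(P)+rank(R)=rank(Q).$$
Adding them and cancelling yields $2\cdot rank(P)=0$, whence $P=0$. So, under the standing hypotheses of Conjecture 6.6, the additional assumption $S=0$ in fact collapses \emph{two} of the four projections to zero.

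Next I would check that with $P=0$ and $S=0$ the inequality reduces to $0\geq 0$. The left-hand side $Tr(PR)+Tr(QS)$ is trivially $0+0=0$. For the right-hand side, the middle factor of $P'=(P+R)^{-1/2}P(P+R)^{-1/2}$ vanishes, so $P'=0$; symmetrically $S'=(Q+S)^{-1/2}S(Q+S)^{-1/2}=0$. Hence $Tr(P'Q')=Tr(0\cdot Q')=0$ and $Tr(R'S')=Tr(R'\cdot 0)=0$, so both sides of the inequality equal $0$, and we are done.

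There is therefore no real obstacle: the only step worth writing down is the implication $S=0\Rightarrow P=0$ coming from the rank constraints, after which the inequality verifies itself. The genuine difficulty is in the companion statement obtained by removing $(5)$ and $(6)$, namely the assertion $Tr(PR)\geq Tr(P'Q)$ under $PQ=0$ and $Im(P)\cap Im(R)=\{0\}$, with $P'=(P+R)^{-1/2}P(P+R)^{-1/2}$. That inequality, which carries the real analytic content of Conjecture 6.6 in the $S=0$ situation, appears to demand a careful spectral analysis of the pair $(P,R)$ on the subspace $Im(P+R)$, and is the natural next target beyond the present special case.
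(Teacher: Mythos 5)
Your rank computation is arithmetically correct: if one insists on keeping hypotheses (5) and (6) verbatim after setting $S=0$, then adding them gives $2\,rank(P)=0$, so $P=0$ and both sides of the inequality vanish. But this reading empties the proposition of all content, and it is not what the paper proves. The proposition is offered as nontrivial evidence for Conjecture 6.6, and the paper's argument makes no use of (5) and (6) at all. With $S=0$ one has $S'=0$ and $Q'=Q$ (since the Moore--Penrose inverse of a projection is itself), so the claim being established is exactly the inequality $Tr(PR)\geq Tr(P'Q)$ for projections with $P\perp Q$ and $Im(P)\cap Im(R)=\{0\}$ --- that is, precisely the ``companion statement'' that you name in your last paragraph and then explicitly decline to prove. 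The entire mathematical content of the proposition lies in that deferred step, so the proposal has a genuine gap.

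The missing argument is the ``careful spectral analysis of the pair $(P,R)$'' that you anticipated, and it is carried out via Halmos's two-subspace normal form \cite{hal}. One decomposes the space according to the four intersections $Im/Ker(P)\cap Im/Ker(R)$ plus a generic part on which $P=U^*\left(\begin{smallmatrix}I&0\\0&0\end{smallmatrix}\right)U$ and $R=U^*\left(\begin{smallmatrix}I-H&W\\W&H\end{smallmatrix}\right)U$ with commuting $H,W$; hypothesis (3) kills the block $Im(P)\cap Im(R)$. A direct computation then gives $P'$ and $R'$ explicitly on the generic part, with diagonal blocks $\frac{1}{2}(I\pm\sqrt{H})$, and the inequality follows from the chain
$$Tr(PR)=Tr(I-H)\geq Tr(I-\sqrt{H})=2\,Tr(P'(I-P))\geq 2\,Tr(P'Q)\geq Tr(P'Q')\,,$$
where the first inequality uses $\sqrt{H}\geq H$ for $0\leq H\leq I$ and the second uses $Q\leq I-P$. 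Note that this even yields the stronger bound $Tr(PR)\geq 2\,Tr(P'Q)$. To repair your proposal you would need to supply this (or an equivalent) analysis; the vacuous-truth route does not engage with what the proposition is actually asserting.
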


\begin{proof}
We can write $P,R$ by using the Halmos normal form \cite{hal}: 
\begin{eqnarray*}
P&=&I_{00}\oplus I_{01}\oplus 0_{10}\oplus 0_{11}\oplus U^*\begin{pmatrix}I&0\\0&0\end{pmatrix}U\\
R&=&I_{00}\oplus 0_{01}\oplus I_{10}\oplus 0_{11}\oplus U^*\begin{pmatrix}I-H &W\\W&H\end{pmatrix}U
\end{eqnarray*}

By using the condition (3) in the statement, we can replace the first term in the direct sums above by $0$. Now by using the fact that $H,W$ commute, we have:
\begin{eqnarray*}
P'&=&0_{00}\oplus I_{01}\oplus 0_{10}\oplus 0_{11}\oplus\frac{1}{2}U^*\begin{pmatrix}I+\sqrt{H}&-\sqrt{I-H}\\-\sqrt{I-H}&I-\sqrt{H}\end{pmatrix}U\\
R'&=&0_{00}\oplus 0_{01}\oplus I_{10}\oplus 0_{11}\oplus\frac{1}{2}U^* \begin{pmatrix}I-\sqrt{H}&\sqrt{I-H}\\ \sqrt{I-H}&I+\sqrt{H}\end{pmatrix}U
\end{eqnarray*}

We therefore have the following estimate:
$$Tr(PR)=Tr(I-H)\geq Tr(I-\sqrt{H})=2Tr(P'(I-P))\geq 2 Tr(P'Q)$$

Thus we have obtained the desired inequality.
\end{proof}

\end{document}